\def\({\left(}
\def\){\right)}
\def\p{\partial}
\def\.{\cdot}
\def\R{\mathbb{R}}
\def\s{\,\,\,\,\,\,}
\def\e{\varepsilon}
\def\m{\mathcal{M}}
\def\D{\slashed{D}}
\def\A{\mathcal{A}}
\def\L{\mathcal{L}}
\def\N{\mathcal{N}}
\def\B{\mathcal{B}}
\def\W{\mathcal{W}}
\def\ex{\overline{\exp}}
\newtheorem{theorem}{Theorem}[section]
\newtheorem{lemma}[theorem]{Lemma}
\newtheorem{corollary}[theorem]{Corollary}
\newtheorem{definition}[theorem]{Definition}
\newtheorem{remark}[theorem]{Remark}
\numberwithin{equation}{section}
\begin{document}

\title{Neumann Boundary Problem of Second Order Parabolic Quasi-Linear System with  Variable Coefficient on a Vector Bundle}
\author{Zonglin Jia}

\begin{abstract}
Classical results of second order parabolic quasi-linear equations always require that the nonlinear terms are controlled by a power of the unknown functions and their first derivatives. We improve the previous results. More precisely, in the present article the upper bound of nonlinear term can be extended to a power series.
\end{abstract}
\maketitle

\setcounter{tocdepth}{2}
\pagenumbering{roman} \tableofcontents \newpage \pagenumbering{arabic}
\section{Introduction}
As is well known, second order parabolic system is very important in math and physics. Ones of the most studied are the equations of the following forms
\begin{eqnarray*}
\left\{
\begin{array}{rl}
&\p_tu(t,x)=\Delta u(t,x)+F(u(t,x),\p u(t,x)),\s\s(t,x)\in\R^+\times\Omega\\
&\p_{\vec{n}}u=0,\s\s(t,x)\in\R^+\times\p\Omega\\
&u(0,x)=u_0(x)
\end{array}
\right.
\end{eqnarray*}
with $\Omega$ being a bounded domain in Euclidean Space, or
\begin{eqnarray*}
\left\{
\begin{array}{rl}
&\p_tu(t,x)=\Delta u(t,x)+F(u(t,x),\p u(t,x)),\s\s(t,x)\in\R^+\times M\\
&u(0,x)=u_0(x)
\end{array}
\right.
\end{eqnarray*}
with $M$ being a closed Riemannian manifold. To the best of our knowledge, all of their nonlinear terms are required to satisfy power conditons, i.e. 
$$
|F(U,P)|\lesssim(|U|+|P|)^{\alpha}\s\s\mbox{and}\s\s|\p_UF(U,P)|+|\p_PF(U,P)|\lesssim(|U|+|P|)^{\alpha-1}.
$$
The literatures in this area are refered to \cite{C}, \cite{T} and \cite{W}. 

In this article, we study Neumann boundary problem of second order parabolic quasi-Linear system with  variable coefficients on a vector bundle and our nonlinear term is assumed to meet \textbf{series condition}, i.e. the nonlinear term is bounded by a power series. These details are the following. 

$(M,g)$ with the Riemannian metric $g$ is a compact $m$-dimensional Manifold. $x:=(x^1,\cdots,x^m)$ stands for a point in $M$. Define $J(m):=m$ if $m\geq2$ and $J(1)=2$. For two tensors $S$ and $T$, $S\sharp T$ means a new tensor, which  constractes $S\otimes T$ by $g^{-1}$.

Throughout this article, the same indices appearing twice means summing it. To state our main results, let us give some definitions.

\begin{definition}
The spatial tangent bundle $\mathcal{S}M$ is a vector bundle on $[0,T)\times M$, which is a subset of the tangent bundle $T\([0,T)\times M\)$ satisfing
$$
\mathcal{S}M:=\{S\in T\([0,T)\times M\)|dt(S)=0\}.
$$
\end{definition}
Let $(E,\langle\.,\.\rangle,\D)$ be a vector bundle over $[0,T)\times M$ satisfing
\begin{eqnarray*}
d\langle\omega_1,\omega_2\rangle=\langle\D\omega_1,\omega_2\rangle+\langle\omega_1,\D\omega_2\rangle\s\s\mbox{and}\s\s \D_t\D_X\omega=\D_X\D_t\omega
\end{eqnarray*}
for all $X\in \mathcal{S}M$.

Furthermore, we define a vector bundle $\p E:=E|_{[0,T]\times\p M}$ over $[0,T]\times\p M$ and we restrict the metric $\langle\.,\.\rangle$ and the connection $\D$ on $\p E$ , which are denoted by the same symbols.

\begin{definition}
$A\in\mbox{Hom}(E,E)$ are called bounded if $|A(\omega)|_{E}\lesssim|\omega|_{E}$.
\end{definition}

\begin{definition}
$A\in\mbox{Hom}(E,E)$ is called positive-definite if $\langle A(\omega_1),\omega_2\rangle=\langle \omega_1,A(\omega_2)\rangle$ and $|A(\omega)|_{E}\gtrsim|\omega|_{E}$.
\end{definition}

Theorem \ref{theorem2} is the foundation to quasi-linear case.

\begin{theorem}\label{theorem2}
For the next system 
\begin{eqnarray}\label{5}
\left\{
\begin{array}{rl}
&\D_t\omega-\D_x[A(\D_x\omega)]\sharp g^{-1}+B(\D_Y\omega)+C(\omega)=F\\
&\omega(0,\.)=\omega_0,\s\s\D_{\vec{n}}\omega=\theta
\end{array}
\right.
\end{eqnarray}
we assume that $A,B,C$ are in $\mbox{Hom}(E,E)$ and $\vec{n}$ the inner normal vector. $F$ is a known section on $E$ and $Y\in\mathcal{S}M$ is a known vector field. Furthermore, $A, B, C, F, Y$, $\omega_0$ are smooth and $\theta$ is $C^1$-differentiable with respect to $x$. Then $(\ref{5})$ admits a unique smooth solution,  provided $A$ is positive-definite.
\end{theorem}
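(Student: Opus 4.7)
The plan is to treat (\ref{5}) as a linear second-order parabolic system on $E$ with uniformly elliptic principal part (by positive-definiteness of $A$) and to establish existence, uniqueness, and smoothness via a Galerkin/energy scheme adapted to the bundle setting. First I would reduce to the homogeneous Neumann case: since $\theta$ is $C^1$ in $x$ and $\p M$ is smooth, one constructs a smooth section $\eta$ on $[0,T]\times M$ with $\D_{\vec{n}}\eta=\theta$ on $[0,T]\times\p M$ (for instance by extending $\theta$ via parallel transport along $\vec{n}$ in a collar neighbourhood and then cutting off). Replacing $\omega$ by $\omega-\eta$ yields a system of the same form with homogeneous Neumann data and a modified, still smooth, inhomogeneity $F$ and initial datum $\omega_0$.

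Next I would set up a Galerkin approximation using the eigensections of the symmetric, non-negative elliptic operator $L\omega:=-\D_x[A(\D_x\omega)]\sharp g^{-1}$ with domain $\{\omega\in H^2(E):\D_{\vec{n}}\omega=0\}$. Self-adjointness and coercivity
$$\langle L\omega,\omega\rangle_{L^2}=\int_M\langle A(\D_x\omega),\D_x\omega\rangle\,d\mu\gtrsim\|\D_x\omega\|_{L^2}^2$$
follow from integration by parts together with metric compatibility of $\D$, the Neumann condition killing the boundary term; hence $L$ admits a complete $L^2$-orthonormal basis of eigensections. Projecting the reduced equation onto finite-dimensional invariant subspaces yields a linear ODE with smooth coefficients, globally solvable on $[0,T]$.

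The heart of the argument is the a priori estimate. Pairing the equation with $\omega$ gives $L^\infty_tL^2_x\cap L^2_tH^1_x$ control via Cauchy--Schwarz, Young's inequality, and Gr\"onwall; pairing with $\D_t\omega$ upgrades this to $L^\infty_tH^1_x$ for $\omega$ and $L^2_tL^2_x$ for $\D_t\omega$. Differentiating in $t$, which preserves the Neumann condition because $\D_t\D_X=\D_X\D_t$ for $X\in\mathcal{S}M$ by hypothesis, and iterating bounds $\D_t^k\omega$ in the same norms. At each fixed $t$ the equation then reads
$$L\omega=F-\D_t\omega-B(\D_Y\omega)-C(\omega),\qquad \D_{\vec{n}}\omega=0,$$
and Neumann elliptic regularity on the bundle $E$ lifts $\omega$ to higher Sobolev spaces; a standard bootstrap with Sobolev embedding yields smoothness. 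Uniqueness is immediate from the $L^2$ energy estimate applied to the difference of two solutions, and compactness passes the estimates through the Galerkin limit to give the actual solution.

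The principal obstacle I expect is the spatial bootstrap: plain differentiation in $x$ does not preserve the Neumann condition, and spatial commutators of $\D$ generate curvature terms of $E$ that must be absorbed at each level. I would handle this by flattening the boundary in local coordinates, applying tangential difference quotients along $\p M$ to obtain tangential regularity, and then recovering normal derivatives from the equation itself, which expresses $\D_{\vec{n}}^2\omega$ in terms of tangential derivatives and lower-order quantities. The curvature contributions enter only as bounded lower-order terms and are absorbed by Gr\"onwall at each step of the induction.
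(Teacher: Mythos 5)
Your existence mechanism (Galerkin approximation by eigensections of the Neumann realization of $L$, followed by energy estimates in $t$ and elliptic regularity in $x$) is a legitimate alternative to what the paper actually does, namely a space--time variational formulation in the style of Baker (the bilinear form $\mathfrak{B}_{\lambda}$ together with a Lax--Milgram type argument) followed by difference-quotient regularity. One technical point to repair in your scheme: the paper allows $A$ to depend on $t$, so the eigensections of $L$ move with $t$ and your finite-dimensional subspaces are not invariant under the evolution; you would need a fixed Galerkin basis and a non-diagonal ODE system, which is routine but should be said.

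The genuine gap is your very first step. The hypothesis is only that $\theta$ is $C^1$ in $x$ (and nothing at all is assumed about its regularity in $t$), so in general there is no smooth --- indeed no $C^2_x$ --- lift $\eta$ with $\D_{\vec{n}}\eta=\theta$: any such $\eta$ has essentially the same tangential regularity as $\theta$, and the modified source term in the reduced problem contains $\D_x[A(\D_x\eta)]\sharp g^{-1}$ and $\D_t\eta$, i.e.\ two tangential derivatives and one time derivative of $\theta$ that need not exist. Hence the reduction to homogeneous Neumann data with smooth right-hand side, on which your entire Galerkin-plus-bootstrap scheme rests, is not available under the stated hypotheses. This is exactly where the paper does its real work: it keeps $\theta$ in the weak formulation as the boundary integral $\int_0^T\int_{\p M}\langle A(\theta),\varphi\rangle$, gets tangential second derivatives by the difference quotients you describe, and then treats the normal direction not by ``reading $\D_{\vec{n}}^2\omega$ off the equation'' but by testing against difference quotients of the explicit extension $\tilde{\omega}=\mathcal{T}^m\omega+x^m\theta$ across the boundary --- an argument that consumes only $\theta$ and its first tangential derivatives and that simultaneously establishes that the Neumann condition is attained in the limit. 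Your closing sketch of ``recovering normal derivatives from the equation'' silently assumes the boundary datum has already been absorbed, i.e.\ it presupposes the step that is missing; as written, the proposal proves the theorem only under the stronger assumption that $\theta$ extends to a smooth section, not under the paper's hypothesis.
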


To describe Theorem \ref{theorem1.2}, we firstly make some preparation.
\subsection{Some function spaces}
We define the Sobolev space $W^{l,r}_p$ as
$$
W^{l,r}_p:=\{\omega\in L^{\infty}\([0,T),L^p(M)\)|\s||\omega||_{W^{l,r}_p}<\infty\}
$$
with
$$
||\omega||^p_{W^{l,r}_p}:=\sup\limits_{t\in[0,T)}\left\{\sum\limits_{k=0}^l||\D_x^k\omega(t)||^p_{L^p(M)}+\sum\limits_{k=0}^r||\D^k_t\omega(t)||^p_{L^p(M)}\right\}.
$$
Besides, we define a norm on $C^{\infty}(E)$ as
$$
||\omega||^2:=||\omega||^2_{L^2(E)}+\int_{\R}\int_M|D_x\omega|^2+\int_M|\omega(T,\cdot)|^2+\int_M|\omega(0,\cdot)|^2+\int_0^T\int_{\p M}|\omega|^2.
$$

Before  illustrating Theorem \ref{theorem1.2}, we shall give some definitions of constants. $c(p)$ is the Sobolev's Constant such that
$$
||\omega||_{L^p}\leq c(p)||\omega||_{W^{\lceil p\rceil_J,0}_J}
$$
and $C(k,p)$ is the constant in Lemma $\ref{lemma2.1}$. $	C$ denotes a universal constant appearing in Section \ref{section3}. $C(||A||_{W_{\infty}^{n,0}})$ means a constant depended upon $||A||_{W_{\infty}^{n,0}}$.
\begin{theorem}\label{theorem1.2}
Assume that $A$, who is positive-definite and smooth, satisfies $\D_tA=0$ and \textbf{Condition 2:} 
$g^{-1}\sharp\langle A(\D_x\omega),\omega\rangle\otimes\langle\D_x\omega,\omega\rangle\geq0$ for any $\omega\in E$.
Define $\L_A\omega:=-g^{-1}\sharp\D_x[A(\D_x\omega)]$. Moreover, suppose that $F$ is smooth and satisfies the \textbf{Series Condition:}
$$
|\D_t^p\D_x^sD^q_{\eta}D^r_{\rho}F(t,x,\eta,\rho)|\leq C_{psqr}\(|\eta|,|\rho|\):=\sum\limits_{u+v=1}C_{uv}^{psqr}|\eta|^u|\rho|^v
$$
for some smooth function $C_{psqr}$ which is non-decreasing with respect to the two variables. Furthermore, we impose \textbf{Condition 1:}
$$
c((w+u)b(n))^{2w+2u}\left(\sum\limits_{a+b+q+r=0}^{n-2}|C^{abqr}_{wu}|\right)\leq n^{w+u}/[L(w+u)!]
$$
with
$$b(n):=(3+n/2+3n^2/2)J$$
and $L:=\sup\limits_{0\leq t<T}\exp\left[J2^9C(2,J)^4t^{2/J}e^{2\tilde{C}}+8\mathfrak{L}(t)^2\right]
$, where 
\begin{eqnarray*}
&&\mathfrak{L}(t):=||\omega_0||_{W^{2,0}_J}\\
&&+e^{Ct}\int_0^tds\,\left\{||\L_A[F(s,\.,\omega_0,\D_x\omega_0)]-\L_A^2\omega_0||^J_{L^J}+||\D_{\vec{n}}[F(s,\.,\omega_0,\D_x\omega_0)]-\D_{\vec{n}}\L_A\omega_0||^J_{L^J(\p M)}\right\},
\end{eqnarray*} 
\begin{eqnarray*}
\tilde{C}:=\sup\limits_{n\in\mathbb{N}}18^n(n!)^3\sup\limits_{t\in[0,T)}\left[\tau(n,n,t)+||\theta(t)||^J	_{W^{1,0}_J}+\mathfrak{B}^{1+n}(t)\right]<\infty,
\end{eqnarray*}
\begin{eqnarray*}
\tau(n,n,t):=\log\left\{3+2||\theta(t)||^J_{W^{1,0}_J}+||\D_t^n\theta(t)||^J_{L^J}\right\}+\log\left[\Psi(n,n)(1+t)\right]+2Ct+\mu(0,n)n!,
\end{eqnarray*}
$$
\Psi(n,n):=2^{27Jn^3}n^{8Jn^2}n^{3Jn^3}C(2,J)^{4Jn^2}c((1+n)b(1+n))^{8J(1+n)^3}J^{26Jn^3},
$$
\begin{eqnarray*}
&&\mu(0,n):=(J+2)\sum\limits_{j=0}^{n-1}\frac{\log(n-j)}{(n-j)!}+J\sum\limits_{j=0}^{n-1}\frac{\log C(||A||_{W^{(2n-3)j+2n,0}_{\infty}})}{(n-j)!}\\
&&+J\log2\sum\limits_{j=0}^{n-1}\frac{(2n-4)j+3n-3}{(n-j)!}+\sum\limits_{j=0}^{n-1}\frac{\log\left(S([2n-4]j+3n-3)\right)}{(n-j)!}\nonumber\\
&&+\sum\limits_{j=0}^{n-1}\frac{\log[(2n-3)j+2n]}{(n-j)!}+J\log2\sum\limits_{j=0}^{n-1}\frac{(2n-4)j+3n-3}{(n-j)!}\\
&&+\sum\limits_{j=0}^{n-1}J[(2n-4)j+3n-3]\frac{\log\{c(J[(2n-4)j+3n-3]^2)\}}{(n-j)!}\\
&&+\log\left[1+\lambda((2n-1)(n-1))\right],
\end{eqnarray*}
$$
S(n):=\sum\limits_{a+b+f+u=0}^{n}||C_{abfu}(|\omega_0|,|\D_x\omega_0|)||_{L^{\infty}}^J,
$$
$$
\lambda(n):=||\omega_0||^J_{W^{n,0}_J}+C(||A||_{W_{\infty}^{n+2,0}})^J||F(0,\.,\omega_0,\D_x\omega_0)||^J_{W^{n,0}_J}
$$
and
\begin{eqnarray*}
&&\mathfrak{B}^n(t):=||\omega_0||^J_{L^J}/J+Ce^{Ct}\left[\sum\limits_{r+q=0}^{n-1}||\L_A^q[\D_t^rF(0,\.,\omega_0,\D_x\omega_0)]||_{L^J}^J+\sum\limits_{i=0}^n||\L_A^i\omega_0||_{L^J}^J\right]\\
&&+Ce^{Ct}\int_0^t\left[\sum\limits_{i=0}^n||\D_t^iF(s,\.,\omega_0,\D_x\omega_0)||^J_{L^J}+\sum\limits_{i=0}^n||\D_t^{i}\theta(s,\.)||_{L^J(\p M)}^J\right]ds.
\end{eqnarray*}
Then, provided that $\theta$ is $C^1$-differentiable with respect to $x$ and $C^{\infty}$-differentiable with respect to $t$ and $\omega_0$ is smooth, the following equation admits a unique  solution $\omega\in \bigcap\limits_{r+2l\leq 2k+2}L^{\infty}([0,T),W^{r,l}_J)$ for small $T>0$
\begin{eqnarray}\label{eq:2}
\left\{
\begin{array}{rl}
&\D_t\omega+\L_A\omega=F(t,x,\omega,\D_x\omega)\\
&\omega(0,\.)=\omega_0,\s\D_{\vec{n}}\omega=\theta,
\end{array}
\right.
\end{eqnarray}
where $k$ is the largest integer such that \begin{eqnarray*}
||\theta(t)||^J	_{W^{1,0}_J}<\exp\{[(k-1)!]^{-3}\tilde{C}\}-1\s\s\mbox{for $t\in[0,T)$}.
\end{eqnarray*} 
\end{theorem}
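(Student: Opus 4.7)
My plan is to solve the quasi-linear system (\ref{eq:2}) by a Picard-type iteration that reduces each step to the linear problem (\ref{5}) handled by Theorem \ref{theorem2}. Set $\omega^{(0)}:=\omega_0$ (extended in $t$) and, inductively, let $\omega^{(n+1)}$ solve
\begin{equation*}
\D_t\omega^{(n+1)}+\L_A\omega^{(n+1)}=F(t,x,\omega^{(n)},\D_x\omega^{(n)}),\quad\omega^{(n+1)}(0,\cdot)=\omega_0,\quad\D_{\vec n}\omega^{(n+1)}=\theta.
\end{equation*}
Because $A$ is positive-definite and smooth, and $F(t,x,\omega^{(n)},\D_x\omega^{(n)})$ is smooth by the inductive hypothesis, Theorem \ref{theorem2} (taken with $B\equiv0$, $C\equiv0$ and the given $F$) produces a unique smooth $\omega^{(n+1)}$. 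The real task is extracting uniform bounds good enough to extract a limit.

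\textbf{A priori estimates and role of the series condition.} The main work is to bound $\|\omega^{(n)}\|_{W^{r,l}_J}$ for $r+2l\le 2k+2$ independently of $n$, for $T$ small. I would induct on the order $r+2l$. At the base, testing the equation against $|\omega|^{J-2}\omega$ and integrating by parts in the principal part gives a Gronwall-type inequality whose driving term reproduces $\mathfrak B^n(t)$: Condition 2 guarantees non-negativity of the interior quadratic form, while the Neumann datum $\theta$ produces the boundary contribution in $\mathfrak B^n(t)$. For higher order, I apply $\D_t^i\L_A^j$ to the equation, replace each $\D_t\omega$ in the compatibility identities at $t=0$ by $F-\L_A\omega$ via repeated use of the equation (this produces exactly the corrector $\mathfrak L(t)$), and then test. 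The Faà di Bruno / Leibniz expansion of $\D_t^p\D_x^s F(\cdot,\omega^{(n)},\D_x\omega^{(n)})$ is then estimated via the Series Condition together with Sobolev embedding $\|\cdot\|_{L^p}\le c(p)\|\cdot\|_{W^{\lceil p\rceil_J,0}_J}$, which is where $p=(w+u)b(n)$ with $b(n)=(3+n/2+3n^2/2)J$ appears; the accumulating combinatorial weight at order $n$ is exactly $\Psi(n,n)$ and $\mu(0,n)n!$, and Condition 1 is designed as the convergence test that absorbs this into the constants $L$ and $\tilde C$ so that the induction closes at each level without the prefactor blowing up.

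\textbf{Main obstacle.} The delicate point is precisely closing the induction under the series (not polynomial) bound on $F$: at every new order one gains derivatives on the coefficient $A$ (hence the $C(\|A\|_{W^{(2n-3)j+2n,0}_\infty})$ in $\mu(0,n)$), higher factorial combinatorial factors from differentiating a composition, and one new power of $(|\omega|+|\D_x\omega|)$ from each term of the series expansion of $F$. The only reason the scheme does not diverge is that Condition 1 forces the $n$-th coefficient of the majorant series to decay like $n^{w+u}/[L(w+u)!]$, faster than the growth coming from $\Psi(n,n)$ and $\mu(0,n)$ for $n\le 2k+2$; this is also why the statement only claims regularity up to $2k+2$, with $k$ determined by the largest integer for which $\|\theta\|^J_{W^{1,0}_J}<\exp\{[(k-1)!]^{-3}\tilde C\}-1$ — beyond this threshold the series majorant fails to converge.

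\textbf{Passage to the limit and uniqueness.} With uniform bounds in hand, the difference $\zeta^{(n)}:=\omega^{(n+1)}-\omega^{(n)}$ satisfies a linear Neumann problem with zero initial and boundary data and right-hand side
\begin{equation*}
F(t,x,\omega^{(n)},\D_x\omega^{(n)})-F(t,x,\omega^{(n-1)},\D_x\omega^{(n-1)}),
\end{equation*}
which by the mean value theorem and the Series Condition is controlled linearly by $\zeta^{(n-1)}$ and $\D_x\zeta^{(n-1)}$ with coefficients bounded through the uniform estimates just proved. A basic $L^J$ energy estimate using Condition 2 (the $\theta$-boundary term cancels in the difference) then shows $\omega^{(n)}\mapsto\omega^{(n+1)}$ is a contraction in $L^\infty([0,T'),L^J)$ for $T'\le T$ small enough depending on $\mathfrak L(T)$ and $\tilde C$. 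Weak-$*$ compactness in the higher norms combined with strong convergence in $L^J$ yields a limit $\omega\in\bigcap_{r+2l\le2k+2}L^\infty([0,T),W^{r,l}_J)$, and the same contraction estimate applied to two hypothetical solutions gives uniqueness. The only subtle step is the induction of the previous paragraph; the contraction and limit passage are routine.
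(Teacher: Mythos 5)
Your overall architecture coincides with the paper's: the same Picard iteration $\Omega_n$ through the linear problem of Theorem \ref{theorem2}, $L^J$ energy estimates on time derivatives using \textbf{Condition 2} and the boundary trace theorem, the Series Condition combined with H\"older/Minkowski inequalities and Sobolev embedding to majorize the nonlinearity, and a contraction argument on the differences $\alpha_n=\Omega_{n+1}-\Omega_n$ to pass to the limit. The gap is in the central step, where you assert that one can ``bound $\|\omega^{(n)}\|_{W^{r,l}_J}$ independently of $n$'' by inducting on the order $r+2l$, with Condition 1 absorbing the combinatorics ``so that the induction closes at each level.'' That induction does not close at a fixed level: the energy inequality for $\mathcal{E}_n^k$ (the sum of $\|\D_t^i\Omega_n\|_{L^J}^J$ up to $i=k$) is driven by $\hat F_k^{n-1}$, which contains $\D_x\D_t^{k}\Omega_{n-1}$; controlling that term through the elliptic estimate of Lemma \ref{lemma2.1} forces one to invoke $\D_t^{k+1}\Omega_{n-1}$. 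So the scheme inescapably has the form $\mathcal{B}_n^k\le(\cdots)\,\mathcal{B}_{n-1}^{k+1}$: each Picard step consumes one time derivative of the previous iterate, and no fixed derivative order is ever estimated in terms of itself uniformly in $n$. The paper's resolution --- and the actual content of the theorem --- is to telescope this derivative-losing recursion $n$ times down to $\hat{\mathcal{B}}_0^{k+n}\le\mathfrak{B}^{k+n}(t)$, taking logarithms weighted by $[(k-1)!]^3$ so that the accumulated factor $18^n[(k+n-1)!]^3$ is exactly what the hypothesis $\tilde C<\infty$ (a supremum over all $n$) is designed to dominate.

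Consequently your explanation of the finite regularity is also misattributed: the restriction to $r+2l\le 2k+2$ does not come from the series majorant ``failing to converge'' beyond that order (Condition 1 and the finiteness of $\tilde C$ are hypotheses valid for all orders); it comes from the telescoped bound $\hat{\mathcal{B}}_n^k(t)\le\exp\{[(k-1)!]^{-3}\tilde C\}-\|\theta(t)\|^J_{W^{1,0}_J}-1$, whose right-hand side tends to $-\|\theta\|^J_{W^{1,0}_J}$ as $k\to\infty$ and is therefore a usable (nonnegative) bound only for those $k$ with $\|\theta\|^J_{W^{1,0}_J}<\exp\{[(k-1)!]^{-3}\tilde C\}-1$. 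This is the $\epsilon$-regularity in the boundary datum emphasized in the remark after the theorem, not a convergence radius of the majorant; note also that $\mathfrak{L}(t)$ arises as the bound on $\mathcal{W}_2^0(0)=\|\omega_0\|_{W^{2,0}_J}+\|\Omega_1-\omega_0\|_{W^{2,0}_J}$ seeding the closure of the low-order recursion and the choice of $L$, not as a compatibility corrector. A complete write-up must make the recursion in the pair $(n,k)$ and its factorially weighted telescoping explicit; without it the uniform bounds needed for your compactness and contraction step are not established.
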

\begin{remark}
If $\theta=0$, then $\omega$ is $C^{\infty}$-smooth. 
\end{remark}
\begin{remark}
Theorem \ref{theorem1.2} essentially tells us that, since of the high non-linearity of nonlinear term, the second order parabolic quasi-linear system admits a unique local $C^k$-solution with small initial and boundary datum, even though the initial data is $C^{\infty}$-smooth. Moreover, $k$ depends on the size of boundary data and therefore is $\epsilon$-regularity.
\end{remark}
\section{Proof of the linear system}\label{section2}
\begin{definition}
We call that $\omega\in L^2(E)$ with $||\omega||<\infty$ is a weak solution to (\ref{5}) if
\begin{eqnarray*}
&&\int_M\langle\omega(T,\cdot),\varphi(T,\cdot)\rangle+\int_0^T\int_M\langle A(\D_x\omega),\D_x\varphi\rangle\sharp g^{-1}+\int_0^T\int_M\langle B(\D_Y\omega),\varphi\rangle\\
&&+\int_0^T\int_M\langle C(\omega),\varphi\rangle-\int_0^T\int_M\langle\omega,\D_t\varphi\rangle=\int_0^T\int_M\langle F,\varphi\rangle+\int_M\langle\omega_0,\varphi(0,\cdot)\rangle-\int_0^T\int_{\p M}\langle A(\theta),\varphi\rangle
\end{eqnarray*}
for all $\varphi\in C^{\infty}(E)$.
\end{definition}
\begin{theorem}
$\omega\in W^{1,1}_2$ with $||\omega||<\infty$ is a weak solution to (\ref{5}) if and only if
\begin{eqnarray*}
&&\mathfrak{B}_{\lambda}(\omega,\varphi):=\int_0^T ds\int_Me^{-\lambda s}\langle A(\D_x\omega),\D_x\D_t\varphi\rangle\sharp g^{-1}+\int_0^T ds\int_Me^{-\lambda s}\langle C(\omega),\D_t\varphi\rangle\\
&&+\int_0^T ds\int_Me^{-\lambda s}\langle\D_t\omega,\D_t\varphi\rangle+\int_0^T ds\int_Me^{-\lambda s}\langle B(\D_Y\omega),\D_t\varphi\rangle+\int_M\langle\omega(0,\.),\D_t\varphi(0,\.)\rangle\\
&=&\int_0^T ds\int_Me^{-\lambda s}\langle F,\D_t\varphi\rangle-\int_0^T\int_{\p M}e^{-\lambda s}\langle A(\theta),\D_t\varphi\rangle+\int_M\langle\omega_0,\D_t\varphi(0,\.)\rangle:=\mathfrak{F}(\varphi)
\end{eqnarray*}
for some $\lambda\in\R$ and any $\varphi\in C^{\infty}(E)$.
\end{theorem}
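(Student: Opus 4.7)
The identity $\mathfrak{B}_\lambda(\omega,\varphi)=\mathfrak{F}(\varphi)$ is just the weak-solution identity after (i) integrating by parts in $t$ so that $\D_t$ sits on $\omega$ rather than on $\varphi$, and (ii) renaming the test function by $\varphi\mapsto e^{-\lambda s}\D_t\varphi$. The theorem therefore splits into two reversible steps. The hypothesis $\omega\in W^{1,1}_2$ with $\|\omega\|<\infty$ delivers everything needed for these manipulations: $\D_t\omega\in L^\infty_t L^2_x$, and via the routine embedding $W^{1,1}_2\hookrightarrow C([0,T);L^2(M))$ the traces $\omega(0,\cdot)$ and $\omega(T,\cdot)$ exist in $L^2(M)$ and coincide with the boundary values used to define $\|\omega\|$.

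\emph{Step 1 (integration by parts in $t$).} For any $\varphi\in C^\infty(E)$, absolute continuity of $t\mapsto\langle\omega(t,\cdot),\varphi(t,\cdot)\rangle_{L^2(M)}$ yields
$$\int_0^T\!\!\int_M\langle\omega,\D_t\varphi\rangle=\int_M\langle\omega(T,\cdot),\varphi(T,\cdot)\rangle-\int_M\langle\omega(0,\cdot),\varphi(0,\cdot)\rangle-\int_0^T\!\!\int_M\langle\D_t\omega,\varphi\rangle.$$
Inserting this into the weak-solution definition, the two occurrences of $\int_M\langle\omega(T,\cdot),\varphi(T,\cdot)\rangle$ cancel, so the weak-solution equation is equivalent to the intermediate identity, call it $(\star)$,
\begin{align*}
&\int_0^T\!\!\int_M\langle A(\D_x\omega),\D_x\varphi\rangle\sharp g^{-1}+\int_0^T\!\!\int_M\langle\D_t\omega+B(\D_Y\omega)+C(\omega),\varphi\rangle+\int_M\langle\omega(0,\cdot),\varphi(0,\cdot)\rangle\\
&\qquad=\int_0^T\!\!\int_M\langle F,\varphi\rangle+\int_M\langle\omega_0,\varphi(0,\cdot)\rangle-\int_0^T\!\!\int_{\p M}\langle A(\theta),\varphi\rangle.
\end{align*}
Integrating by parts in $t$ in the opposite direction undoes this step, so $(\star)$ holding for every $\varphi\in C^\infty(E)$ is equivalent to $\omega$ being a weak solution.

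\emph{Step 2 (change of test function).} For the forward direction, substitute $\varphi\leftarrow e^{-\lambda s}\D_t\varphi$ in $(\star)$. Because $e^{-\lambda s}$ depends only on $s$, $\D_x(e^{-\lambda s}\D_t\varphi)=e^{-\lambda s}\D_x\D_t\varphi$ and the trace at $s=0$ equals $\D_t\varphi(0,\cdot)$, so a term-by-term comparison reproduces exactly $\mathfrak{B}_\lambda(\omega,\varphi)=\mathfrak{F}(\varphi)$. For the converse, given arbitrary $\psi\in C^\infty(E)$, set $\varphi(t,x):=\int_0^t e^{\lambda s}\psi(s,x)\,ds\in C^\infty(E)$. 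Then $e^{-\lambda s}\D_t\varphi=\psi$, $\varphi(0,\cdot)=0$, and $\D_t\varphi(0,\cdot)=\psi(0,\cdot)$; feeding this $\varphi$ into $\mathfrak{B}_\lambda(\omega,\varphi)=\mathfrak{F}(\varphi)$ recovers $(\star)$ for the arbitrary test function $\psi$. Combined with Step 1 this gives the desired equivalence.

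There is no genuine obstacle here; the proof reduces to a pair of substitutions together with their obvious inverses. The only point requiring care is the rigor of the $t$-integration by parts for $\omega\in W^{1,1}_2$, which is standard: approximate $\omega$ by time-mollifications $\omega_\eps$, apply the classical Newton--Leibniz formula to $\omega_\eps$, and pass to the limit using $\|\omega\|<\infty$ to control the $t=0,T$ traces.
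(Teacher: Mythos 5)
Your proof is correct and is precisely the ``trick'' that the paper delegates to Proposition 3.6 of the cited reference: integrate by parts in $t$ (legitimate for $\omega\in W^{1,1}_2$ with $||\omega||<\infty$ by the metric compatibility of $\D$ and a mollification argument) to cancel the $t=T$ trace terms, and then substitute $\varphi\mapsto e^{-\lambda s}\D_t\varphi$, whose inverse is the primitive you construct. The only cosmetic point is that $\int_0^t e^{\lambda s}\psi(s,x)\,ds$ should be read as an integral of parallel transports of $\psi(s,x)$ along the time lines so that it genuinely defines a section of $E$ with $\D_t\varphi=e^{\lambda t}\psi$; the assumed properties of $\D$ make this routine.
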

\begin{proof}
Apply the same trick of Proposition 3.6 in \cite{B}.
\end{proof}

\begin{theorem}
There exists a unique function $\omega\in W^{1,1}_2$ with $||\omega||<\infty$ such that $\mathfrak{B}_{\lambda}(\omega,\varphi)=\mathfrak{F}(\varphi)$ for any large positive $\lambda$ and any $\varphi\in C^{\infty}(E)$, provided that $\D_tA, A, B, C, Y, F$ are bounded and $A$ is positive-definite.
\end{theorem}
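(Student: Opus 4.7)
The plan is to apply the J.-L. Lions projection lemma, the non-symmetric variational existence principle tailored to evolution problems. Define the trial Hilbert space $H := \{\omega \in W^{1,1}_2 : \|\omega\| < \infty\}$ with its natural inner product, and equip the test space $\Phi := C^\infty(E)$ with
$$|\varphi|_\Phi^2 := \int_0^T\!\!\int_M e^{-\lambda s}\bigl(|\D_t\varphi|^2 + |\D_x\D_t\varphi|^2\bigr) + \int_M\!\bigl(|\varphi(0,\cdot)|^2 + |\D_x\varphi(0,\cdot)|^2 + |\D_t\varphi(0,\cdot)|^2\bigr) + \int_0^T\!\!\int_{\p M} e^{-\lambda s}|\D_t\varphi|^2.$$
The inclusion $\Phi\hookrightarrow H$ is continuous via Sobolev integration in $t$ and the trace inequality. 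Continuity of $\omega\mapsto\mathfrak{B}_\lambda(\omega,\varphi)$ on $H$ for each fixed $\varphi\in\Phi$ and of $\mathfrak{F}$ on $\Phi$ both reduce to the assumed boundedness of $A,\D_tA,B,C,Y$ together with the trace bound $\|\omega\|_{L^2([0,T]\times\p M)}\lesssim\|\omega\|$.

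The central step is coercivity $\mathfrak{B}_\lambda(\varphi,\varphi)\geq \alpha|\varphi|_\Phi^2$ for $\varphi\in\Phi$ with $\lambda$ sufficiently large. Using the symmetry of $A$ implicit in positive-definiteness one writes
$$\langle A(\D_x\varphi),\D_x\D_t\varphi\rangle\sharp g^{-1} = \tfrac{1}{2}\D_t\bigl[\langle A(\D_x\varphi),\D_x\varphi\rangle\sharp g^{-1}\bigr] - \tfrac{1}{2}\langle(\D_tA)(\D_x\varphi),\D_x\varphi\rangle\sharp g^{-1};$$
integrating in $s$ against $e^{-\lambda s}$ produces a non-negative terminal piece at $s=T$, the amplified bulk piece $\tfrac{\lambda}{2}\int e^{-\lambda s}\langle A(\D_x\varphi),\D_x\varphi\rangle\gtrsim\tfrac{\lambda}{2}\int e^{-\lambda s}|\D_x\varphi|^2$, and an initial piece $-\tfrac{1}{2}\int_M\langle A(\D_x\varphi(0)),\D_x\varphi(0)\rangle$. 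The $B$- and $C$-cross terms are dispatched by Young's inequality $2|ab|\leq\epsilon a^2 + \epsilon^{-1}b^2$, their $|\D_t\varphi|^2$-parts absorbed by the diagonal $\int e^{-\lambda s}|\D_t\varphi|^2$ already in $\mathfrak{B}_\lambda$ and their $|\D_x\varphi|^2$, $|\varphi|^2$-parts by the $\lambda$-amplified bulk term (with $\int e^{-\lambda s}|\varphi|^2$ controlled via a Sobolev-in-$t$ bound by $\int_M|\varphi(0)|^2$ and $\int e^{-\lambda s}|\D_t\varphi|^2$). The indefinite initial contribution $\int_M\langle\varphi(0),\D_t\varphi(0)\rangle$ and the leftover initial piece from the integration by parts are absorbed by the $\int_M\bigl(|\varphi(0)|^2 + |\D_x\varphi(0)|^2 + |\D_t\varphi(0)|^2\bigr)$ entries of $|\varphi|_\Phi$ via one more Young step. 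The threshold for $\lambda$ depends only on the sup-norms of the coefficients.

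Once coercivity is available, the Lions projection lemma provides $\omega\in H$ satisfying $\mathfrak{B}_\lambda(\omega,\varphi)=\mathfrak{F}(\varphi)$ for every $\varphi\in\Phi$. For uniqueness, the difference of two solutions annihilates the homogeneous form on $\Phi$; a density argument combined with coercivity --- or, equivalently, recovering the strong form of the homogeneous parabolic PDE with zero data from the weak equation and invoking the classical $L^2$-energy identity --- forces it to vanish. I expect the main technical obstacle to be the coercivity estimate itself, specifically the handling of the indefinite initial-time term $\int_M\langle\varphi(0),\D_t\varphi(0)\rangle$ and the absorption of the undifferentiated quantity $\int e^{-\lambda s}|\varphi|^2$, for neither of which $\mathfrak{B}_\lambda$ contains a dedicated positive quadratic form; the Sobolev-in-$t$ inequality and the initial entries built into $|\varphi|_\Phi$ are precisely what close these two gaps.
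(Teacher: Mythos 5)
Your overall strategy --- the Lions projection lemma applied to the exponentially weighted form, coercivity on the diagonal of the test space, and a separate energy argument for uniqueness --- is exactly the route the paper intends: its one-line proof defers to Theorem 3.8 of Baker's thesis, which is precisely this argument. The gap is in the coercivity step, and it is not a technicality. Every term of $\mathfrak{B}_\lambda(\omega,\varphi)$ carries a time derivative of $\varphi$ in its second slot, so $\mathfrak{B}_\lambda(\varphi,\varphi)=0$ whenever $\varphi$ is independent of $t$; hence no test norm containing $\int_M\bigl(|\varphi(0,\cdot)|^2+|\D_x\varphi(0,\cdot)|^2\bigr)$ can satisfy $\mathfrak{B}_\lambda(\varphi,\varphi)\geq\alpha|\varphi|_\Phi^2$. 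The entries $\int_M|\D_t\varphi(0,\cdot)|^2$ and $\int_0^T\int_Me^{-\lambda s}|\D_x\D_t\varphi|^2$ fail as well: taking $\varphi(s,x)=\e\chi(s/\e)\psi(x)$ with $\chi(0)=0$, $\chi'(0)=1$ sends $\mathfrak{B}_\lambda(\varphi,\varphi)\to0$ as $\e\to0$ while $\int_M|\D_t\varphi(0)|^2=\|\psi\|_{L^2}^2$ stays fixed; and $\varphi(s,x)=\e\sin(s/\e)\psi_j(x)$ with $\|\psi_j\|_{L^2}=1$, $\|\D_x\psi_j\|_{L^2}\to\infty$ keeps $\mathfrak{B}_\lambda(\varphi,\varphi)$ bounded while $\int e^{-\lambda s}|\D_x\D_t\varphi|^2\to\infty$. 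After your (correct) integration by parts in $s$, the only quantities the diagonal dominates are $\int e^{-\lambda s}|\D_t\varphi|^2$, $\lambda\int e^{-\lambda s}\langle A(\D_x\varphi),\D_x\varphi\rangle$ and the terminal term at $s=T$; the test seminorm must be built from these alone.

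Moreover, the negative piece $-\tfrac12\int_M\langle A(\D_x\varphi(0)),\D_x\varphi(0)\rangle$ produced at $s=0$ by that integration by parts is claimed to be ``absorbed by the $\int_M(\cdots)$ entries of $|\varphi|_\Phi$''; this confuses the two sides of the coercivity inequality, since those entries sit in the quantity being bounded from below and only make the required bound harder. In Baker's and Lieberman's formulations this term is cancelled because the bilinear form itself carries an extra summand $\int_M\langle A(\D_x\omega(0,\cdot)),\D_x\varphi(0,\cdot)\rangle$, matched in the functional by the same expression with $\omega_0$ in place of $\omega(0,\cdot)$; the form as defined in this paper has only $\int_M\langle\omega(0,\cdot),\D_t\varphi(0,\cdot)\rangle$ and provides no cancellation. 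The difficulty is structural: continuity of $\mathfrak{F}$ with respect to $|\cdot|_\Phi$ forces the test norm to control $\|\D_t\varphi(0)\|_{L^2(M)}$ and the boundary trace of $\D_t\varphi$, which are exactly the quantities the diagonal cannot dominate. To close the argument you must either modify $\mathfrak{B}_\lambda$ and $\mathfrak{F}$ (adding the gradient initial term, and removing the $\D_t$ from $\varphi$ in the $\theta$-boundary integral by a further integration by parts in $s$) or choose a genuinely weaker test seminorm and re-establish the continuity of $\mathfrak{F}$ by other means. Your remarks on the $H$-continuity of $\mathfrak{B}_\lambda(\cdot,\varphi)$, on the threshold for $\lambda$, and on uniqueness via the energy identity are fine as stated.
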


\begin{proof}
The approach is similar to that of Theorem 3.8 in \cite{B}.
\end{proof}

\begin{corollary}
There exists a unique weak solution $\omega\in W^{1,1}_2$ with $||\omega||<\infty$ to $(\ref{5})$, provided $\D_tA, A, B, C, Y, F$ are bounded and $A$ is positive-definite.
\end{corollary}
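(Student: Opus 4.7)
The plan is a direct chaining of the two preceding theorems of this section. By the equivalence theorem, $\omega\in W^{1,1}_2$ with $||\omega||<\infty$ is a weak solution to (\ref{5}) exactly when $\mathfrak{B}_{\lambda}(\omega,\varphi)=\mathfrak{F}(\varphi)$ for some $\lambda\in\R$ and every $\varphi\in C^{\infty}(E)$, so the existence-and-uniqueness question for weak solutions reduces to the same question for this bilinear-functional identity at a single value of $\lambda$.

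I would then fix one $\lambda_0>0$ large enough that the existence/uniqueness theorem for $\mathfrak{B}_{\lambda_0}$ applies. The hypotheses of that theorem --- $\D_tA$, $A$, $B$, $C$, $Y$, $F$ bounded and $A$ positive-definite --- are exactly the hypotheses of the corollary, so it delivers a unique $\omega\in W^{1,1}_2$ with $||\omega||<\infty$ satisfying $\mathfrak{B}_{\lambda_0}(\omega,\varphi)=\mathfrak{F}(\varphi)$ for every $\varphi\in C^{\infty}(E)$. The ``if'' half of the equivalence then certifies that this $\omega$ is a weak solution of (\ref{5}), giving existence.

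For uniqueness, given any competing weak solution $\omega'\in W^{1,1}_2$ with $||\omega'||<\infty$, the ``only if'' half of the equivalence yields the bilinear identity for $\omega'$; the small point requiring attention is that the $\lambda$ produced by the equivalence can be taken to be the specific $\lambda_0$ chosen above, which is justified because the weak formulation itself is $\lambda$-independent, so if the identity holds for one $\lambda$ it holds for every $\lambda$ (alternatively, one absorbs the ratio $e^{-(\lambda_1-\lambda_0)s}$ into the test object). Uniqueness from the bilinear-form theorem then forces $\omega'=\omega$. No genuine analytic obstacle arises at this step; the entire burden --- the coercivity estimate for $\mathfrak{B}_{\lambda}$ at large $\lambda$ and the subsequent Lax--Milgram/Lions-type argument --- was already discharged in the previous theorem, and the present corollary is essentially a packaging step converting the bilinear-form solution into a weak solution of (\ref{5}).
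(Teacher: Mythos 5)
Your argument is correct and is precisely the intended derivation: the paper states this corollary without proof as the immediate combination of the preceding equivalence theorem and the existence/uniqueness theorem for $\mathfrak{B}_{\lambda}$. Your remark about reconciling the ``for some $\lambda$'' of the equivalence with the fixed large $\lambda_0$ of the existence theorem is a reasonable point of care, and your resolution (the weak formulation is $\lambda$-independent, so the identity for one $\lambda$ gives it for all) is the right one.
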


\begin{theorem}
Suppose $\omega\in W^{1,1}_2$ with $||\omega||<\infty$ is a weak solution to $(\ref{5})$. If $A, B, C$, $F, Y, \omega_0$ are smooth and $\theta$ is $C^1$-differentiable with respect to $x$, then $\omega$ is also smooth, provided $A$ is positive-definite. Moreover, the smooth solution is unique.
\end{theorem}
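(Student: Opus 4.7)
The plan is to promote the weak solution $\omega\in W^{1,1}_2$ to a classical smooth solution by a bootstrap argument of Nirenberg type, adapted to the bundle setting, and then to obtain uniqueness via an energy inequality. I would first cover $[0,T]\times M$ by interior charts and by boundary-flattening charts in which $M=\{x^m\geq 0\}$ and $\vec n=\p_{x^m}$; trivialising $E$ by a local smooth frame reduces (\ref{5}) to a vector-valued second-order parabolic system in divergence form with smooth coefficients, whose leading symbol is uniformly elliptic by positive-definiteness of $A$. The operators $\D_x,\D_t$ differ from $\p_x,\p_t$ only by smooth zero-order connection terms, which can be absorbed into $B$ and $C$.

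For interior regularity I would apply the classical difference-quotient method to $\mathfrak{B}_\lambda(\omega,\varphi)=\mathfrak{F}(\varphi)$: testing against $-\tau_h^{-k}(\eta^2\tau_h^k\omega)$ with $\eta$ a cutoff supported in an interior chart, and using positive-definiteness of $A$ together with Young's inequality, yields a uniform $L^2$ bound for $\tau_h^k\D_x\omega$, hence $\omega\in W^{2,0}_{2,\mathrm{loc}}$. Reading off from the equation gives $\D_t\omega\in L^2_{\mathrm{loc}}$. At the boundary I would use tangential difference quotients $\tau_h^k$ with $k=1,\dots,m-1$, which commute with the trace on $\p M$; the resulting boundary contribution $\int_{\p M}\langle A(\theta),\tau_h^{-k}(\eta^2\tau_h^k\omega)\rangle$ is controlled by the $C^1_x$-regularity of $\theta$, tangential integration by parts, and a trace inequality. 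Normal second derivatives are then recovered algebraically, since positive-definiteness of the leading coefficient $A^{mm}$ allows $\D_{x^m}^2\omega$ to be solved for in terms of $\D_t\omega$, tangential derivatives of $\omega$, lower-order terms in $\omega$, and $F$.

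To iterate, I would differentiate (\ref{5}) with respect to tangential spatial variables and with respect to $t$. Each differentiated problem has the same form as (\ref{5}) with smooth coefficients, smooth source, and Neumann datum obtained either by tangential differentiation of $\theta$ or, for time derivatives, by evaluating (\ref{5}) at $t=0$; the latter determines $\D_t^k\omega(0,\cdot)$ smoothly from $\omega_0,A,B,C,F$. Repeated application of the previous step yields $\omega\in W^{k,l}_p$ for all admissible $k,l,p$, and Sobolev embedding produces $\omega\in C^\infty$ in the interior. For uniqueness, letting $v=\omega_1-\omega_2$ be the difference of two smooth solutions, the homogeneous Neumann condition $\D_{\vec n}v=0$ kills the boundary term when one integrates by parts in $\int_M\langle -\D_x[A(\D_x v)]\sharp g^{-1},v\rangle$; combined with positive-definiteness of $A$ and Cauchy--Schwarz on the $B,C$ contributions this produces $\p_t\|v\|_{L^2}^2\le C\|v\|_{L^2}^2$, so Gr\"onwall together with $v(0,\cdot)=0$ forces $v\equiv 0$.

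The main obstacle is the boundary step: the Neumann datum $\theta$ is only $C^1$ in $x$, so iterating tangential differentiation of $\theta$ beyond the first step must be interpreted as full smoothness in the interior together with only the boundary regularity compatible with $\theta$, and the gap between ``as smooth as $\theta$ allows near $\p M$'' and ``smooth everywhere'' has to be bridged by the algebraic normal-solve that converts tangential regularity into full regularity. A secondary bookkeeping concern is that the bundle connection $\D$ does not commute with the translations used to define $\tau_h^k$, so commutators must be shown to produce only lower-order absorbable terms --- a standard exercise once a smooth local frame is fixed. Once these geometric issues are dispatched, the argument parallels the classical scalar parabolic regularity theory.
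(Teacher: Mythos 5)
Your proposal follows the same overall route as the paper: cover $M$ by interior charts and boundary-flattening charts, trivialize $E$, obtain interior regularity from standard parabolic theory, obtain tangential second derivatives at the boundary by testing the weak formulation against $D^k_{-h}(\zeta^2D^k_h\omega)$, iterate to higher order, and close with an energy/Gr\"onwall argument for uniqueness. The one step where you genuinely diverge is the recovery of $\p_m^2\omega$ at the boundary. You propose the classical algebraic solve: once $\D_t\omega$ and all tangential second derivatives lie in $L^2$, read $\p_m[A(\p_m\omega)]$ off from the equation and invert the positive-definite normal coefficient. The paper instead extends $\omega$ across $\{x^m=0\}$ by $\tilde\omega:=\omega(t,x',0)+x^m\theta(t,x')$ and tests against $D^m_h(\xi^2D^m_{-h}\tilde\omega)$, so that the Neumann datum is built into the extension and the boundary integral of $\langle A(\theta),\cdot\rangle$ combines with the near-boundary pieces of $I_7-I_8$ into the nonnegative quantity $\frac{1}{h^2}\int_{0\le x^m<h}\langle A(\p_m\omega-\theta),\p_m\omega-\theta\rangle$. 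Both devices are legitimate; yours is shorter and avoids the $o(x^m)/h^2$ bookkeeping, while the paper's extension simultaneously produces the trace identity $\mathcal{T}^m\p_m\omega=\theta$ recorded in $(\ref{13'})$, which in your version must still be extracted separately from the weak formulation before the induction can use the pointwise equation. Your concern about iterating tangential differentiation of the merely $C^1_x$ datum $\theta$ is well taken and is not an artifact of your method: the paper's induction step applies difference quotients directly to the pointwise equation with a cutoff and never re-differentiates the boundary condition, at the price of suppressing the boundary terms that this integration by parts generates; the honest content of either argument near $\p M$ is exactly what you describe, namely interior smoothness plus the boundary regularity that $\theta$ permits, upgraded in the normal direction through the equation.
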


\begin{proof}
Assume that $\{(U_i,\Phi_i), (V_j,\Psi_j)|U_i\cap\p M=\emptyset\}$ is a finite coverage of $M$ and $\{(V_j\cap\p M,\Psi_j)\}$ is a finite coverage of $\p M$ such that $\Phi_i(U_i)=\mathbb{S}(0,1)$ and $\Psi_j(V_j)=\mathbb{S}^+(0,1)$, where $\mathbb{S}(0,r):=\{x\in\R^m|\max\{|x^i|,1\leq i\leq m\}<r\}$ and $\mathbb{S}^+(0,r):=\{x\in\R^m|x^m\geq0,\max\{|x^i|,1\leq i\leq m\}<r\}$. It is not difficult to get
$$\Psi_j(V_j\cap\p M)=\{x=(x^1,x^2,\cdots,x^{m-1},0)|\max\{|x^i|,1\leq i\leq m-1\}<1\}.$$
Without loss of generality, we suppose that $E|_{[0,T]\times U_i}$ and $E|_{[0,T]\times V_j}$ are respectively isomorphic to $[0,T]\times\mathbb{S}(0,1)\times\R^L$ and $[0,T]\times\mathbb{S}^+(0,1)\times\R^L$.

Taking any $\varphi\in C^{\infty}([0,T], C_c^{\infty}(U_i))$ as test function, we have
\begin{eqnarray*}
&&\int_0^T ds\int_M\langle A(\D_x\omega),\D_x\varphi\rangle\sharp g^{-1}+\int_0^T ds\int_M\langle C(\omega),\varphi\rangle+\int_0^T ds\int_M\langle\D_t\omega,\varphi\rangle\\
&&+\int_0^T ds\int_M\langle B(\D_Y\omega),\varphi\rangle+\int_M\langle\omega(0,\.),\varphi(0,\.)\rangle=\int_0^T ds\int_M\langle F,\varphi\rangle+\int_M\langle\omega_0,\varphi(0,\.)\rangle.
\end{eqnarray*}
From the regularity theory of parabolic equation it follows that $\omega\in C^{\infty}([0,T]\times U_i)$.

Now we focus on the case of boundary. Take any $\varphi\in C^{\infty}([0,T]\times V_j)$ with $\mbox{supp}[\varphi(t,\,)]\subset V_j$. For simplicity $\omega\circ\Psi_j^{-1}$ and $\varphi\circ\Psi_j^{-1}$ are also denoted by $\omega$ and $\varphi$. Hence, it is not hard to get
\begin{eqnarray}\label{eq}
&&\int_0^T ds\int_{\mathbb{S}^+(0,1)}\langle A(\p_i\omega),\p_i\varphi\rangle+\int_0^T ds\int_{\mathbb{S}^+(0,1)}\langle C(\omega),\varphi\rangle+\int_0^T ds\int_{\mathbb{S}^+(0,1)}\langle\p_t\omega,\varphi\rangle\\
&&+\int_0^T ds\int_{\mathbb{S}^+(0,1)}\langle B(\p_Y\omega),\varphi\rangle+\int_{\mathbb{S}^+(0,1)}\langle\omega(0,\.),\varphi(0,\.)\rangle=\int_0^T ds\int_{\mathbb{S}^+(0,1)}\langle F,\varphi\rangle\nonumber\\
&&+\int_{\mathbb{S}^+(0,1)}\langle\omega_0,\varphi(0,\.)\rangle-\int_0^T\int_{\substack{x\in\mathbb{S}^+(0,1),\\x^m=0}}\langle A(\theta),\varphi\rangle.\nonumber
\end{eqnarray}
Let $e_k$ be the unit vector whose the $k$-th element is 1 and the others are all 0. Define the shift operator $(\tau_h^k\varphi)(t,x)=\varphi(t,x+he_k)$ and the difference operator $D_h^k\varphi:=(\tau_h^k\varphi-\varphi)/h$ for $h\not=0$.

For $1\leq k\leq m-1$ and $h\in(0,1/16)$, take $D_{-h}^k(\zeta^2 D_{h}^k\omega)$ as test function, where $\zeta\in C^{\infty}(\mathbb{S}^+(0,3/4))$, $0\leq\zeta\leq 1$ and $|\p\zeta|+|\p^2\zeta|\lesssim 1$,
$$\zeta(x)=0\s\mbox{for}\s\max\{|x^i|\,\,|1\leq i\leq m\}>3/4;\s\s\zeta(x)=1\s\mbox{for}\s\max\{|x^i|\,\,|1\leq i\leq m\}<1/2.$$
We focus on the key terms
\begin{eqnarray*}
&&-\int_0^T ds\int_{\mathbb{S}^+(0,1)}\langle A(\p_i\omega),\p_iD_{-h}^k(\zeta^2 D_{h}^k\omega)\rangle\\
&=&\underbrace{\int_0^T ds\int_{\mathbb{S}^+(0,1)}\langle (D_h^kA)(\p_i\tau_h^k\omega),2\zeta\p_i\zeta\.D_h^k\omega\rangle}_{I_1}+\underbrace{\int_0^T ds\int_{\mathbb{S}^+(0,1)}\langle (D_h^kA)(\p_i\tau_h^k\omega),\zeta^2\.D_h^k\p_i\omega\rangle}_{I_2}\\
&&+\underbrace{\int_0^T ds\int_{\mathbb{S}^+(0,1)}\langle A(\p_iD_h^k\omega),2\zeta\p_i\zeta\.D_h^k\omega\rangle}_{I_3}+\underbrace{\int_0^T ds\int_{\mathbb{S}^+(0,1)}\langle A(\p_iD_h^k\omega),\zeta^2\.D_h^k\p_i\omega\rangle}_{I_4}.
\end{eqnarray*}
Recalling that $A$ is positive-definite and smooth, we have
$$
I_{4}\gtrsim\int_0^T ds\int_{\mathbb{S}^+(0,1)}\zeta^2|D^k_h\p\omega|^2,\s\s I_{3}\lesssim\int_0^T ds\int_{\mathbb{S}^+(0,1)}|\p D^k_h\omega|\.|D^k_h\omega|\.|\p\zeta|\.|\zeta|.
$$
and
$$
I_1\lesssim\int_0^T ds\int_{\mathbb{S}^+(0,1)}|\p\tau^k_h\omega|\.|D^k_h\omega|\.|\p\zeta|\.|\zeta|,
\s\s
I_2\lesssim\int_0^T ds\int_{\mathbb{S}^+(0,1)}\zeta^2\.|\p\tau^k_h\omega|\.|\p D^k_h\omega|.
$$
Simple computation leads to
$$
2\int_0^T ds\int_{\mathbb{S}^+(0,1)}\langle\p_t\omega,D^k_{-h}(\zeta^2 D^k_h\omega)\rangle=-\int_{\mathbb{S}^+(0,1)}\zeta^2|D^k_h\omega(0,\.)|^2+\int_{\mathbb{S}^+(0,1)}\zeta^2|D^k_h\omega(T,\.)|^2
$$
and
\begin{eqnarray*}
\int_0^T\int_{\substack{x\in\mathbb{S}^+(0,1),\\x^m=0}}\langle A(\theta),D_{-h}^k(\zeta^2 D_h^k\omega)\rangle&=&\int_0^T\int_{\substack{x\in\mathbb{S}^+(0,1),\\x^m=0}}\langle (D_h^kA)(\tau_h^k\theta),D_h^k\omega\rangle\zeta^2\\
&&-\int_0^T\int_{\substack{x\in\mathbb{S}^+(0,1),\\x^m=0}}\langle A(D_h^k\theta),D_h^k\omega\rangle\zeta^2,
\end{eqnarray*}
which implies
\begin{eqnarray*}
&&\Big|\int_0^T\int_{\substack{x\in\mathbb{S}^+(0,1),\\x^m=0}}\langle A(\theta),D_{-h}^k(\zeta^2 D_h^k\omega)\rangle\Big|\lesssim\int_0^T\int_{\substack{x\in\mathbb{S}^+(0,1),\\x^m=0}}(|\theta|+|D^k_h\theta|)|D_h^k\omega|.
\end{eqnarray*}
Classical trick gives
\begin{eqnarray*}
&&\sum\limits_{k=1}^{m-1}(\int_0^Tds\int_{\mathbb{S}^+(0,1)}\zeta^2|D_h^k\p\omega|^2-\int_{\mathbb{S}^+(0,1)}\zeta^2|D_h^k\omega(0,\.)|^2+\int_{\mathbb{S}^+(0,1)}\zeta^2|D_h^k\omega(T,\.)|^2)\\
&\lesssim&\sum\limits_{k=1}^{m-1}\int_0^Tds\int_{\mathbb{S}^+(0,1)}|\p\zeta|^2|D_h^k\omega|^2+\int_0^Tds\int_{\mathbb{S}^+(0,1)}|\p\omega|^2-\sum\limits_{k=1}^{m-1}\int_{\mathbb{S}^+(0,1)}\zeta^2|D_h^k\omega_0|^2\\
&&+\sum\limits_{k=1}^{m-1}\int_0^Tds\int_{\mathbb{S}^+(0,1)}|D_h^kF|^2+\sum\limits_{k=1}^{m-1}\int_0^T\int_{\substack{x\in\mathbb{S}^+(0,1),\\x^m=0}}|D_h^k\omega|^2+\int_0^T\int_{\substack{x\in\mathbb{S}^+(0,1),\\x^m=0}}(|\theta|^2+\sum\limits_{k=1}^{m-1}|D^k_h\theta|^2).
\end{eqnarray*}
Letting $h$ tend to $0$, we have
\begin{eqnarray*}
&&\sum\limits_{k=1}^{m-1}\(\int_0^Tds\int_{\mathbb{S}^+(0,1)}\zeta^2|\p_k\p\omega|^2-\int_{\mathbb{S}^+(0,1)}\zeta^2|\p_k\omega(0,\.)|^2+\int_{\mathbb{S}^+(0,1)}\zeta^2|\p_k\omega(T,\.)|^2\)\\
&\lesssim&\sum\limits_{k=1}^{m-1}\int_0^Tds\int_{\mathbb{S}^+(0,1)}|\p\zeta|^2|\p_k\omega|^2+\int_0^Tds\int_{\mathbb{S}^+(0,1)}|\p\omega|^2-\sum\limits_{k=1}^{m-1}\int_{\mathbb{S}^+(0,1)}\zeta^2|\p_k\omega_0|^2\\
&&+\sum\limits_{k=1}^{m-1}\int_0^Tds\int_{\mathbb{S}^+(0,1)}|\p_kF|^2+\sum\limits_{k=1}^{m-1}\int_0^T\int_{\substack{x\in\mathbb{S}^+(0,1),\\x^m=0}}|\p_k\omega|^2+\int_0^T\int_{\substack{x\in\mathbb{S}^+(0,1),\\x^m=0}}\(|\theta|^2+\sum\limits_{k=1}^{m-1}|\p_k\theta|^2\),
\end{eqnarray*}
which implies that $\p_k\p_x\omega$ exists for $1\leq k\leq m-1$.

As for the case when $k=m$, we extend $\omega$ as
\begin{eqnarray*}
\tilde{\omega}(t, x^1,\cdots,x^{m-1},x^m)&:=&\omega(t, x^1,\cdots,x^{m-1},0)+x^m\theta(t, x^1,\cdots,x^{m-1})\\
&:=&\(\mathcal{T}^m\omega\)(t,x^1,\cdots,x^{m-1},x^m)+x^m\theta(t, x^1,\cdots,x^{m-1})
\end{eqnarray*}
for $x^m<0$. Take $D_{h}^m(\xi^2 D_{-h}^m\tilde{\omega})$ as test function, where $\xi\in C^{\infty}(\mathbb{S}^+(0,1))$, $\xi(x^1,\cdots,x^{m-1},x^m)=1$ for $0\leq x^m<1/2$ and $\xi(x^1,\cdots,x^{m-1},x^m)=0$ for $3/4\leq x^m<1$.

We focus on the key terms
\begin{eqnarray*}
&&-\int_0^T ds\int_{\mathbb{S}^+(0,1)}\langle A(\p_i\omega),\p_iD_{h}^m(\xi^2 D_{-h}^m\tilde{\omega})\rangle\\
&=&\underbrace{\frac{1}{h}\int_0^T\int_{\mathbb{S}^+(0,1)}\langle A(\p_i\omega),2\xi\p_i\xi D^m_{-h}\tilde{\omega}+\xi^2 D^m_{-h}\p_i\tilde{\omega}\rangle}_{I_7}\\
&&-\underbrace{\frac{1}{h}\int_0^T\int_{\mathbb{S}^+(0,1)}\langle (\tau^m_hA)(\tau^m_h\p_i\omega),2\tau^m_h\xi\.\tau^m_h\p_i\xi\.D^m_h\omega+(\tau^m_h\xi)^2\. D^m_{h}\p_i\omega\rangle}_{I_8}\\
&&+\underbrace{\int_0^T\int_{\mathbb{S}^+(0,1)}\langle D^m_h[A(\p_i\omega)],2\tau^m_h\xi\.\tau^m_h\p_i\xi\.D^m_h\omega+(\tau^m_h\xi)^2\.D^m_h\p_i\omega\rangle}_{I_9}.
\end{eqnarray*}
Simple computation gives
\begin{eqnarray*}
I_7&=&I_8+\frac{1}{h^2}\sum\limits_{k=1}^{m-1}\int_0^T\int_{0\leq x^m<h}\langle A(\p_k\omega),(\mathcal{T}^m\p_k\p_m\omega-\p_k\theta)x^m+h\p_k\theta+o(x^m)\rangle\\
&&+\frac{1}{h^2}\int_0^T\int_{0\leq x^m<h}\langle A(\p_m\omega),\p_m\omega-\theta\rangle
\end{eqnarray*}
and
\begin{eqnarray*}
I_9\gtrsim\int_0^T\int_{\mathbb{S}^+(0,1)}|\tau^m_h\xi|^2|D^m_h\p_x\omega|^2-\int_0^T\int_{\mathbb{S}^+(0,1)}|\p_x\omega|^2-\int_0^T\int_{\mathbb{S}^+(0,1)}|D^m_h\omega|^2.
\end{eqnarray*}
Moreover,
$$
\int_0^T\int_{\substack{x\in\mathbb{S}^+(0,1),\\x^m=0}}\langle A(\theta),D_{h}^m(\xi^2 D_{-h}^m\tilde{\omega})\rangle=\frac{1}{h^2}\int_0^T\int_{0\leq x^m<h}\langle A(\theta),\p_m\omega-\theta\rangle
$$
implies
\begin{eqnarray*}
&&-\int_0^T ds\int_{\mathbb{S}^+(0,1)}\langle A(\p_i\omega),\p_iD_{h}^m(\xi^2 D_{-h}^m\tilde{\omega})\rangle-\int_0^T\int_{\substack{x\in\mathbb{S}^+(0,1),\\x^m=0}}\langle A(\theta),D_{h}^m(\xi^2 D_{-h}^m\tilde{\omega})\rangle\\
&\gtrsim&\frac{1}{h^2}\sum\limits_{k=1}^{m-1}\int_0^T\int_{0\leq x^m<h}\langle A(\p_k\omega),(\mathcal{T}^m\p_k\p_m\omega-\p_k\theta)x^m+h\p_k\theta+o(x^m)\rangle\\
&&+\frac{1}{h^2}\int_0^T\int_{0\leq x^m<h}\langle A(\p_m\omega-\theta),\p_m\omega-\theta\rangle-\int_0^T\int_{\mathbb{S}^+(0,1)}|\p_x\omega|^2-\int_0^T\int_{\mathbb{S}^+(0,1)}|D^m_h\omega|^2\\
&&+\int_0^T\int_{\mathbb{S}^+(0,1)}|\tau^m_h\xi|^2|D^m_h\p_x\omega|^2\\
&\gtrsim&\int_0^T\int_{\mathbb{S}^+(0,1)}|\tau^m_h\xi|^2|D^m_h\p_x\omega|^2-\int_0^T\int_{\mathbb{S}^+(0,1)}|\p_x\omega|^2-\int_0^T\int_{\mathbb{S}^+(0,1)}|D^m_h\omega|^2\\
&&-\frac{1}{h^2}\sum\limits_{k=1}^{m-1}\int_0^T\int_{0\leq x^m<h}|\p_k\omega|\.(|\mathcal{T}^m\p_k\p_m\omega-\p_k\theta|x^m+h|\p_k\theta|+\e x^m)\\
&\gtrsim&\int_0^T\int_{\mathbb{S}^+(0,1)}|\tau^m_h\xi|^2|D^m_h\p_x\omega|^2-\int_0^T\int_{\mathbb{S}^+(0,1)}|\p_x\omega|^2-\int_0^T\int_{\mathbb{S}^+(0,1)}|D^m_h\omega|^2\\
&&-\(\frac{1}{h}\int_0^T\int_{0\leq x^m<h}|\p_x\omega|^2\)^{1/2}\sum\limits_{k=1}^{m-1}\(\int_0^T\int|\mathcal{T}^m\p_k\p_m\omega-\p_k\theta|^2+|\p_k\theta|^2+\e^2\)^{1/2}
\end{eqnarray*}
where we have used the fact that $A$ is positive-definite. Furthermore, one are able to get
\begin{eqnarray*}
&&2\int_0^T\int_{\mathbb{S}^+(0,1)}\langle\p_t\omega,D^m_h(\xi^2D^m_{-h}\tilde{\omega})\rangle\\
&=&\int_{\mathbb{S}^+(0,1)}(\tau^m_h\xi)^2|D^m_h\omega|^2(0,\.)-\int_{\mathbb{S}^+(0,1)}(\tau^m_h\xi)^2|D^m_h\omega|^2(T,\.)\\
&&-\frac{2}{h^2}\int_0^T\int_{0\leq x^m<h}\langle\p_t\omega,x^m(\mathcal{T}^m\p_m\omega-\theta)+o(x^m)\rangle-\frac{2}{h}\int_0^T\int_{0\leq x^m<h}\langle\p_t\omega,\theta\rangle,
\end{eqnarray*}
which implies
\begin{eqnarray*}
&&\Big|\int_0^T\int_{\mathbb{S}^+(0,1)}\langle\p_t\omega,D^m_h(\xi^2D^m_{-h}\tilde{\omega})\rangle\Big|\\
&\lesssim&\int_{\mathbb{S}^+(0,1)}(\tau^m_h\xi)^2|D^m_h\omega|^2(0,\.)+\int_{\mathbb{S}^+(0,1)}(\tau^m_h\xi)^2|D^m_h\omega|^2(T,\.)\\
&&+\frac{1}{h^2}\int_0^T\int_{0\leq x^m<h}x^m|\p_t\omega|\(|\mathcal{T}^m\p_m\omega-\theta|+\e\)+\frac{1}{h}\int_0^T\int_{0\leq x^m<h}|\p_t\omega|\.|\theta|\\
&\lesssim&\int_{\mathbb{S}^+(0,1)}|D^m_h\omega|^2(0,\.)+\int_{\mathbb{S}^+(0,1)}|D^m_h\omega|^2(T,\.)\\
&&+\(\frac{1}{h}\int_0^T\int_{0\leq x^m<h}|\p_t\omega|^2\)^{1/2}\(\int_0^T\int|\mathcal{T}^m\p_m\omega-\theta|^2+|\theta|^2+\e^2\)^{1/2}.
\end{eqnarray*}
Classical trick gives
\begin{eqnarray}\label{12}
&&\int_0^T\int_{\mathbb{S}^+(0,1)}|\tau^m_h\xi|^2\.|D^m_h\p_x\omega|^2\\
&\lesssim&\int_{\mathbb{S}^+(0,1)}|D^m_h\omega|^2(0,\.)+\int_{\mathbb{S}^+(0,1)}|D^m_h\omega|^2(T,\.)+\int_{\mathbb{S}^+(0,1)}|D^m_h\omega_0|^2+\int_0^T\int_{\mathbb{S}^+(0,1)}|D^m_h\omega|^2\nonumber\\
&&+\int_0^T\int_{\mathbb{S}^+(0,1)}|\p_x\omega|^2+\int_0^T\int_{\mathbb{S}^+(0,1)}|D^m_hF|^2+\(\mathcal{E}_h\)^{1/2}\mathcal{E}^{1/2},\nonumber
\end{eqnarray}
where
\begin{eqnarray*}
\mathcal{E}_h:=\frac{1}{h}\(\int_{0\leq x^m<h}|\omega_0|^2+\int_{0\leq x^m<h}|\omega(0,\.)|^2+\int_0^T\int_{0\leq x^m<h}|F|^2+\int_0^T\int_{0\leq x^m<h}|\p_x\omega|^2+\int_0^T\int_{0\leq x^m<h}|\p_t\omega|^2\)
\end{eqnarray*}
and
\begin{eqnarray*}
\mathcal{E}:&=&(1+T)\e^2+\int_0^T\int\(|\mathcal{T}^m\p_m\omega-\theta|^2+|\theta|^2\)+\sum\limits_{k=1}^{m-1}\int_0^T\int\(|\mathcal{T}^m\p_k\p_m\omega-\p_k\theta|^2+|\p_k\theta|^2\)\\
&&+\int\(|\mathcal{T}^m\p_m\omega(0,\.)-\theta(0,\.)|^2+|\theta(0,\.)|^2\).
\end{eqnarray*}
Lebesgue differentiation theorem tells us that the right-hand side of \ref{12} is bounded for all $h>0$. Hence, $\p_m\p_x\omega$ exists and we also have
\begin{eqnarray}\label{13'}
\left\{
\begin{array}{rl}
&\p_t\omega-\p_i[A(\p_i\omega)]+B(\p_Y\omega)+C(\omega)=F,\s\mbox{on}\s\mathbb{S}^+(0,1)\\
&\omega(0,\.)=\omega_0,\s\s\mathcal{T}^m\p_m\omega=\theta.
\end{array}
\right.
\end{eqnarray}

In the next, we shall employ induction. Assume that $\p_x^{\alpha}\omega$ exists for all $|\alpha|\leq k$($k\geq 2$). Taking $D^3_h\p_x^{\beta}$ with $|\beta|=k-2$ to act on both sides of $(\ref{13'})$, we obtain
\begin{eqnarray*}
D^3_h\p_t\p_x^{\beta}\omega-\p_iD^3_h\p_x^{\beta}[A(\p_i\omega)]+D^3_h\p_x^{\beta}[B(\p_Y\omega)]+D^3_h\p_x^{\beta}[C(\omega)]=D^3_h\p_x^{\beta}F\s\mbox{on}\s\mathbb{S}^+(0,3/4),
\end{eqnarray*}
where $D_h:=(D^1_h,\cdots,D^m_h)$.

Multipling $\xi^2D^3_h\p_x^{\beta}\omega$ on both sides of the above equation and integrating by parts lead to
\begin{eqnarray*}
&&\frac{1}{2}\frac{d}{dt}\int_{\mathbb{S}^+(0,1)}\xi^2|D^3_h\p_x^{\beta}\omega|^2+\int_{\mathbb{S}^+(0,1)}\langle D^3_h\p_x^{\beta}[A(\p_i\omega)],\p_i\(\xi^2D^3_h\p_x^{\beta}\omega\)\rangle\\
&&+\int_{\mathbb{S}^+(0,1)}\xi^2\langle D^3_h\p_x^{\beta}\omega,D^3_h\p_x^{\beta}[B(\p_Y\omega)]\rangle+\int_{\mathbb{S}^+(0,1)}\xi^2\langle D^3_h\p_x^{\beta}\omega,D^3_h\p_x^{\beta}[C(\omega)]\rangle\\
&=&\int_{\mathbb{S}^+(0,1)}\xi^2\langle D^3_h\p_x^{\beta}\omega,D^3_h\p_x^{\beta}F\rangle,
\end{eqnarray*}
where $\xi\in C_c^{\infty}(\mathbb{S}^+(0,3/4))$. Since $A$ is positive-definite, we obtain Gronwall's Inequality
\begin{eqnarray*}
\frac{d}{dt}\int_{\mathbb{S}^+(0,1)}\xi^2|D^3_h\p_x^{\beta}\omega|^2\lesssim\int_{\mathbb{S}^+(0,1)}\xi^2|D^3_h\p_x^{\beta}\omega|^2+\sum\limits_{\substack{|\gamma|\leq k-2,\,p\leq 3\\|\gamma|+p\leq k}}\int_{\mathbb{S}^+(0,1)}|D^p_h\p_x^{\gamma}\omega|^2+\int_{\mathbb{S}^+(0,1)}|D^3_h\p_x^{\beta}F|^2,
\end{eqnarray*}
which implies that $\p_x^{\alpha}\omega$ exists for $|\alpha|=k+1$.

The uniqueness is easy and we leave it to the readers.
\end{proof}
\section{Proof of the quasi-linear system}\label{section3}
\begin{proof}
In this subsection, we denote the smallest integer not smaller than $m/a-m/p$ by $\lceil p\rceil_a$. Note $\lceil p\rceil_a\leq\lceil m/a\rceil$, where $\lceil x\rceil$ means the smallest integer not smaller than $x$.

Choose $\Omega_0(t,x):=\omega_0(x)$. For $n\geq 1$, let $\Omega_n$ be the unqiue smooth solution of the following linear system
\begin{eqnarray}\label{eq:18'}
\left\{
\begin{array}{rl}
&\D_t\Omega_n+\L_A\Omega_n=F(t,x,\Omega_{n-1},\D_x\Omega_{n-1})\\
&\Omega_n(0,\.)=\omega_0,\s\D_{\vec{n}}\Omega_n=\theta.
\end{array}
\right.
\end{eqnarray}
Defining $\alpha_n:=\Omega_{n+1}-\Omega_{n}$,  we have
\begin{eqnarray}\label{eq:5}
\left\{
\begin{array}{rl}
&\D_t\alpha_n+\L_A\alpha_n=\int_0^1ds(D_{\eta}F)(t,x,s\alpha_{n-1}+\Omega_{n-1},\D_x\Omega_{n-1})(\alpha_{n-1})\\
&+\int_0^1ds(D_{\rho}F)(t,x,\Omega_{n-1},s\D_x\alpha_{n-1}+\D_x\Omega_{n-1})(\D_x\alpha_{n-1})\\
&:=G(t,x,\Omega_{n-1},\D_x\Omega_{n-1},\alpha_{n-1},\D_x\alpha_{n-1})\\
&\alpha_n(0,\.)=0,\s\D_{\vec{n}}\alpha_n=0.
\end{array}
\right.
\end{eqnarray}

\textbf{Step 1: Estimating $\Omega_n$}

Taking $k$-times derivatives with respect to $t$ on both sides of $(\ref{eq:18'})$ yields
\begin{eqnarray}\label{eq:3}
\left\{
\begin{array}{rl}
&\D^{k+1}_t\Omega_n+\L_A(\D^k_t\Omega_n)=\hat{F}^{n-1}_k\\
&\Omega_n(0,\.)=\omega_0,\s\D^j_t\Omega_n(0,\.)=-\L_A[\D_t^{j-1}\Omega_n(0,\.)]+\hat{F}^{n-1}_{j-1}|_{t=0},\s\s\mbox{for}\s 1\leq j\leq k\\
&\D_{\vec{n}}\D^k_t\Omega_n=\D^k_t\theta
\end{array}
\right.
\end{eqnarray}
where $\hat{F}^{n-1}_k:=\hat{F}^{n-1}_k(t,x,\Omega_{n-1},\D_t\Omega_{n-1},\cdots,\D_t^k\Omega_{n-1},\D_x\Omega_{n-1},\D_x\D_t\Omega_{n-1},\cdots,\D_x\D_t^k\Omega_{n-1})$. Tedious computation gives
\begin{eqnarray*}
|\hat{F}^{n-1}_k|\leq\sum C_{p0qr}(|\Omega_{n-1}|,|\D_x\Omega_{n-1}|)\prod\limits_{i=1}^q|\D_t^{q_i}\Omega_{n-1}|^{\tau_i}\prod\limits_{j=1}^r|\D_t^{r_j}\D_x\Omega_{n-1}|^{\mu_j}
\end{eqnarray*}
with
$$
p+\sum\limits_{i=1}^qq_i\tau_i+\sum\limits_{j=1}^rr_j\mu_j=k.
$$
Defining
\begin{eqnarray*}
\mathcal{E}_n^k(t):=\frac{1}{J}\sum\limits_{i=0}^{k}\int_M\Big|\D_t^i\Omega_n\Big|^J,
\end{eqnarray*}
we have
\begin{eqnarray*}
\frac{d}{dt}\mathcal{E}_n^k&=&\sum\limits_{i=0}^{k-1}\int\Big|\D_t^i\Omega_n\Big|^{J-2}\langle\D_t^{i+1}\Omega_n,\D_t^{i}\Omega_n\rangle+\underbrace{\int\Big|\D_t^{k}\Omega_n\Big|^{J-2}\langle\D_t^{k+1}\Omega_n,\D_t^{k}\Omega_n\rangle}_{I}.
\end{eqnarray*}
Now we compute $I$
\begin{eqnarray*}
I&=&\int\Big|\D_t^{k}\Omega_n\Big|^{J-2}\langle\hat{F}^{n-1}_k-\L_A(\D_t^k\Omega_n),\D_t^{k}\Omega_n\rangle\\
&=&\int\Big|\D_t^{k}\Omega_n\Big|^{J-2}\langle\hat{F}^{n-1}_k,\D_t^{k}\Omega_n\rangle-\int\Big|\D_t^{k}\Omega_n\Big|^{J-2}\langle A(\D_x\D_t^k\Omega_n),\D_x\D_t^{k}\Omega_n\rangle\sharp g^{-1}\\
&&-(J-2)\int\Big|\D_t^{k}\Omega_n\Big|^{J-4}\langle A(\D_x\D_t^k\Omega_n),\D_t^{k}\Omega_n\rangle\otimes\langle \D_x\D_t^k\Omega_n,\D_t^{k}\Omega_n\rangle\sharp g^{-1}\\
&&-\int_{\p M}\Big|\D_t^{k}\Omega_n\Big|^{J-2}\langle A(\D_t^k\theta),\D_t^{k}\Omega_n\rangle.
\end{eqnarray*}
\textbf{Condition 2} implies
\begin{eqnarray*}
I\lesssim\int\Big|\D_t^{k}\Omega_n\Big|^{J-1}|\hat{F}^{n-1}_k|-\int\Big|\D_t^{k}\Omega_n\Big|^{J-2}|\D_x\D_t^k\Omega_n|^2+||\D_t^k\theta||^J_{L^J(\p M)}+||\D_t^k\Omega_n||^J_{L^J(\p M)}.
\end{eqnarray*}
Boundary Trace Theorem tells us
\begin{eqnarray*}
||\D_t^k\Omega_n||^J_{L^J(\p M)}&=&||\,|\D_t^k\Omega_n|^J||_{L^1(\p M)}\lesssim||\,|\D_t^k\Omega_n|^J||_{W_1^{1,0}}\lesssim||\D_t^k\Omega_n||^J_{L^J}+||\,|\D_t^k\Omega_n|^{J-1}|\D_x\D_t^k\Omega_n|\,||_{L^1}\\
&\lesssim&C(\epsilon)||\D_t^k\Omega_n||^J_{L^J}+\epsilon||\,|\D_t^k\Omega_n|^{J-2}|\D_x\D_t^k\Omega_n|^2\,||_{L^1}
\end{eqnarray*}
where we have used Kato's Inequality and H\"{o}lder's Inequality.

Combining the above inequalities leads to
\begin{eqnarray*}
\frac{d}{dt}\mathcal{E}_n^k\leq C\mathcal{E}_n^k+C||\hat{F}^{n-1}_k||^J_{L^J}+C||\D_t^k\theta||^J_{L^J(\p M)}.
\end{eqnarray*}

Assume $C_{psqr}(|\eta|,|\rho|)=\sum\limits_{w+u=1}^{\infty}C_{wu}^{psqr}|\eta|^w|\rho|^u$. H\"{o}lder's Inequality implies
\begin{eqnarray}\label{jzl}
||\hat{F}^{n-1}_k||_{L^J}\leq\sum||C_{p0qr}(|\Omega_{n-1}|,|\D_x\Omega_{n-1}|)||_{L^{\chi}}\prod\limits_{i=1}^q||\D_t^{q_i}\Omega_{n-1}||^{\tau_i}_{L^{a_i\tau_i}}\prod\limits_{j=1}^r||\D_t^{r_j}\D_x\Omega_{n-1}||^{\mu_j}_{L^{b_j\mu_j}}
\end{eqnarray}
with
$$
p+\sum\limits_{i=1}^qq_i\tau_i+\sum\limits_{j=1}^rr_j\mu_j=k\s\s\mbox{and}\s\s\frac{1}{\chi}+\sum\limits_{i=1}^q\frac{1}{a_i}+\sum\limits_{j=1}^r\frac{1}{b_j}=\frac{1}{J},
$$
where $\chi$, $a_i$ and $b_j$ are to be determined. General Minkowski's Inequality tells us that
$$
||C_{psqr}(|\Omega_{n-1}|,|\D_x\Omega_{n-1}|)||_{L^{\chi}}\leq\sum\limits_{w+u=1}^{\infty}|C_{wu}^{psqr}|\.||\,\,|\Omega_{n-1}|^w\.|\D_x\Omega_{n-1}|^u||_{L^{\chi}}.
$$
Now we choose $c_w$ and $d_u$ which are to be determined such that $1/c_w+1/d_u=1/\chi$
$$
||\,\,|\Omega_{n-1}|^w\.|\D_x\Omega_{n-1}|^u||_{L^{\chi}}\leq||\Omega_{n-1}||^w_{L^{w\.c_w}}||\D_x\Omega_{n-1}||^u_{L^{u\.d_u}}.
$$

In the next, we shall estimate $||\D_t^p\Omega_{n-1}||_{W^{k,0}_J}$. From Lemma \ref{lemma2.1} and $(\ref{eq:3})$ it follows that
\begin{eqnarray}\label{11}
||\D_t^p\Omega_{n-1}||_{W^{k,0}_J}&\leq& C(k-2,J)\{||\theta||_{W^{k-1,0}_J}+||\hat{F}_p^{n-1}||_{W^{k-2,0}_J}\\
&&+||\D_t^{p+1}\Omega_{n-1}||_{W^{k-1,0}_J}+(1+||\D_t^p\Omega_{n-1}||_{W^{k-1,0}_J})^{2k-3}\}.\nonumber
\end{eqnarray}
It is easy to see that the key is to control $||\hat{F}_p^{n-1}||_{W^{k-2,0}_J}$. Tedious computation gives
\begin{eqnarray*}
|\D_x^j\hat{F}^{n-1}_p|&\leq&\sum C_{abqr}(|\Omega_{n-1}|,|\D_x\Omega_{n-1}|)\prod\limits_{i=1}^q|\D_x^{c_i}\D_t^{d_i}\Omega_{n-1}|^{\tau_i}\prod\limits_{l=1}^r|\D_x^{1+e_l}\D_t^{h_l}\Omega_{n-1}|^{\mu_l}\\
&\leq&\sum\sum\limits_{w+u=1}^{\infty}|C^{abqr}_{wu}|\.|\Omega_{n-1}|^w|\D_x\Omega_{n-1}|^u\prod\limits_{i=1}^q|\D_x^{c_i}\D_t^{d_i}\Omega_{n-1}|^{\tau_i}\prod\limits_{l=1}^r|\D_x^{1+e_l}\D_t^{h_l}\Omega_{n-1}|^{\mu_l}
\end{eqnarray*}
with
$$
a+\sum\limits_{i=1}^qd_i\tau_i+\sum\limits_{l=1}^rh_l\mu_l=p\s\mbox{and}\s b+\sum\limits_{i=1}^qc_i\tau_i+\sum\limits_{l=1}^r(1+e_l)\mu_l=j.
$$
H\"{o}lder's Inequality and general Minkowski's Inequality lead to
$$
||\D_x^j\hat{F}^{n-1}_p||_{L^J}\leq\sum\sum\limits_{w+u=1}^{\infty}|C^{abqr}_{wu}|\.||\Omega_{n-1}||_{L^{w\nu_w}}^w||\D_x\Omega_{n-1}||_{L^{u\eta_u}}^u\prod\limits_{i=1}^q||\D_x^{c_i}\D_t^{d_i}\Omega_{n-1}||_{L^{\delta_i\tau_i}}^{\tau_i}\prod\limits_{l=1}^r||\D_x^{1+e_l}\D_t^{h_l}\Omega_{n-1}||_{L^{\alpha_l\mu_l}}^{\mu_l}
$$
with $1/\nu_w+1/\eta_u+\sum\limits_{i=1}^q1/\delta_i+\sum\limits_{l=1}^r1/\alpha_l=1/J$ to be determined. Sobolev's Embedding tells us that there exists a constant $c(p)$ such that
$$
||\omega||_{L^p}\leq c(p)||\omega||_{W^{\lceil p\rceil_J,0}_J}
$$
where $c(p)$ is non-decreasing with respect to $p$ with $c(J)=1$. Hence it follows that
\begin{eqnarray}\label{10}
||\D_x^j\hat{F}^{n-1}_p||_{L^J}&\leq&\sum\sum\limits_{w+u=1}^{\infty}c(w\nu_w)^wc(u\eta_u)^u\prod\limits_{i=1}^qc(\delta_i\tau_i)^{\tau_i}\prod\limits_{l=1}^rc(\alpha_l\mu_l)^{\mu_l}|C^{abqr}_{wu}|\.||\Omega_{n-1}||^w_{W^{\lceil w\nu_w\rceil_J,0}_J}||\Omega_{n-1}||_{W^{1+\lceil u\eta_u\rceil_J,0}_J}^u\nonumber\\
&&\prod\limits_{i=1}^q||\D_t^{d_i}\Omega_{n-1}||_{W^{c_i+\lceil\delta_i\tau_i\rceil_J,0}_J}^{\tau_i}\prod\limits_{l=1}^r||\D_t^{h_l}\Omega_{n-1}||_{W^{1+e_l+\lceil\alpha_l\mu_l\rceil_J,0}_J}^{\mu_l}
\end{eqnarray}
Substituting $(\ref{10})$ into $(\ref{11})$ yields
\begin{eqnarray}\label{1.14}
&&||\D_t^p\Omega_{n-1}||_{W^{k,0}_J}\leq C(k,J)\left\{||\D_t^{p+1}\Omega_{n-1}||_{W^{k-2,0}_J}+(1+||\D_t^p\Omega_{n-1}||_{W^{k-1,0}_J})^{2k-3}+||\theta||_{W^{k-1,0}_J}\right\}\\
&&+C(k,J)\sum\limits_{j=0}^{k-2}\sum\limits_{\substack{a+\sum\limits_{i=1}^qd_i\tau_i+\sum\limits_{l=1}^rh_l\mu_l=p\\b+\sum\limits_{i=1}^qc_i\tau_i+\sum\limits_{l=1}^r(1+e_l)\mu_l=j}}\sum\limits_{w+u=1}^{\infty}c(w\nu_w)^wc(u\eta_u)^u\prod\limits_{i=1}^qc(\delta_i\tau_i)^{\tau_i}\prod\limits_{l=1}^rc(\alpha_l\mu_l)^{\mu_l}|C^{abqr}_{wu}|\nonumber\\
&&||\Omega_{n-1}||^w_{W^{\lceil w\nu_w\rceil_J,0}_J}||\Omega_{n-1}||_{W^{1+\lceil u\eta_u\rceil_J,0}_J}^u\prod\limits_{i=1}^q||\D_t^{d_i}\Omega_{n-1}||_{W^{c_i+\lceil\delta_i\tau_i\rceil_J,0}_J}^{\tau_i}\prod\limits_{l=1}^r||\D_t^{h_l}\Omega_{n-1}||_{W^{1+e_l+\lceil\alpha_l\mu_l\rceil_J,0}_J}^{\mu_l}.\nonumber
\end{eqnarray}
Define $\N_k^p(n):=||\Omega_{n}||_{W^{k,p}_J}$, which is increasing with respect to $k$ and $p$, and choose
$$
\nu_w=\eta_u=\delta_i=\alpha_l:=J(2+q+r)\leq J(k+p).
$$
Moreover, $(\ref{1.14})$ implies
\begin{eqnarray*}
&&\N^p_k(n-1)\leq C(k,J)\left\{\N^{p+1}_{k-2}(n-1)+[1+\N^p_{k-1}(n-1)]^{2k-3}+||\theta||_{W^{k-1,0}_J}\right\}\\
&&+2^{k+p-2}C(k,J)c(J(k+p)^2)^{k+p}\N^{p}_{k-1}\left(n-1\right)^{\min\{k-2,p\}}\\
&&\sum\limits_{w+u=1}^{\infty}c(J(w+u)(k+p))^{w+u}\left(\sum\limits_{a+b+q+r=0}^{k-2+p}|C^{abqr}_{wu}|\right)\N^0_2\left(n-1\right)^{w+u}.
\end{eqnarray*}
Under \textbf{Condition 1} we obtain
\begin{eqnarray}\label{jxw}
\N^p_k(n-1)&\leq& C(k,J)2^{4k-7+2p}\left[1+\N^{p+1}_{k-1}(n-1)+||\theta||_{W^{k-1,0}_J}\right]^{3k-5+p}\\
&&c(J(k+p)^2)^{k+p}\ex\left\{(k+p)\N^0_{2}(n-1)\right\},\nonumber
\end{eqnarray}
where $\ex(x):=[\exp(x)-1]/L$.

Now our goal is to estimate $||\hat{F}^{n-1}_k||_{L^J}$ in $(\ref{jzl})$
\begin{eqnarray*}
&&||\hat{F}^{n-1}_k||_{L^J}\leq\sum\limits_{p+\sum\limits_{i=1}^qq_i\tau_i+\sum\limits_{j=1}^rr_j\mu_j=k}\sum\limits_{w+u=1}^{\infty}|C^{p0qr}_{wu}|c(w\.c_w)^wc(u\.d_u)^u\prod\limits_{i=1}^qc(a_i\tau_i)^{\tau_i}\prod\limits_{j=1}^rc(b_j\mu_j)^{\mu_j}\\
&&[\N^0_{\lceil wc_w\rceil_J}(n-1)]^w[\N^0_{1+\lceil ud_u\rceil_J}(n-1)]^u\prod\limits_{i=1}^q[\N^{q_i}_{\lceil a_i\tau_i\rceil_J}(n-1)]^{\tau_i}\prod\limits_{j=1}^r[\N^{r_j}_{1+\lceil b_j\mu_j\rceil_J}(n-1)]^{\mu_j}\\
&&\leq\sum\limits_{p+\sum\limits_{i=1}^qq_i\tau_i+\sum\limits_{j=1}^rr_j\mu_j=k}\sum\limits_{w+u=1}^{\infty}|C^{p0qr}_{wu}|c(w\.c_w)^wc(u\.d_u)^u\prod\limits_{i=1}^qc(a_i\tau_i)^{\tau_i}\prod\limits_{j=1}^rc(b_j\mu_j)^{\mu_j}\\
&&[\N^0_{1}(n-1)]^w[\N^0_{2}(n-1)]^u\prod\limits_{i=1}^q[\N^{q_i}_{1}(n-1)]^{\tau_i}\prod\limits_{j=1}^r[\N^{r_j}_{2}(n-1)]^{\mu_j}\\
&&\leq\sum\limits_{w+u=1}^{\infty}\left(\sum\limits_{a+b+c=0}^k|C^{a0bc}_{wu}|\right)\sum\limits_{p+\sum\limits_{i=1}^qq_i\tau_i+\sum\limits_{j=1}^rr_j\mu_j=k}c(w\.c_w)^wc(u\.d_u)^u\\
&&\prod\limits_{i=1}^qc(a_i\tau_i)^{\tau_i}\prod\limits_{j=1}^rc(b_j\mu_j)^{\mu_j}[\N^0_{2}(n-1)]^{u+w}[\N^{k}_{2}(n-1)]^{k}.
\end{eqnarray*}
Choosing $c_w=d_u:=2J(1+q+r)\leq 2J(1+k)$ and $a_i=b_j:=J(1+q+r)\leq J(1+k)$ implies
\begin{eqnarray*}
&&||\hat{F}^{n-1}_k||_{L^J}\leq2^k\sum\limits_{w+u=1}^{\infty}\left(\sum\limits_{f+h+l=0}^k|C^{f0hl}_{wu}|\right)c(2J(1+k)(w+u))^{w+u}\\
&&c(J(1+k)k)^{k}[\N^0_{2}(n-1)]^{u+w}[\N^{k}_{2}(n-1)]^{k}.
\end{eqnarray*}
Under \textbf{Condition 1} we see
\begin{eqnarray}\label{jxw2}
||\hat{F}^{n-1}_k||_{L^J}\leq2^k\ex\{2(1+k)\N_2^0(n-1)\}c(J(1+k)k)^{k}[\N^{k}_{2}(n-1)]^{k}.
\end{eqnarray}
$(\ref{jxw})$ tells us
\begin{eqnarray*}
&&\N^{k-1}_{2}(n-1)\leq c(J(1+k)^2)^{1+k}C(2,J)2^{2k-1}[1+\N^{k}_1(n-1)+||\theta||_{W^{1,0}_J}]^{k}\ex\{(1+k)\N_2^0(n-1)\}\\
&&\leq c(J(1+k)^2)^{2k^2}C(2,J)^{k+1}2^{4k^2}[1+J^{1/J}(\mathcal{E}^{k+1}_{n-1})^{1/J}+||\theta||_{W^{1,0}_J}]^{k^2}\ex\{3k^2\N_2^0(n-1)\}\\
&&:=V(k,\mathcal{E}^{k+1}_{n-1},\N_2^0(n-1)).
\end{eqnarray*}
Substituting the above inequalities into $(\ref{jxw2})$ gives
\begin{eqnarray}\label{ldy}
&&||\hat{F}^{n-1}_k||_{L^J}\leq 2^k\ex\left\{2(1+k)\N_2^0(n-1)\right\}c(J(1+k)k)^{k}V(k,\mathcal{E}^{k+1}_{n-1},\N_2^0(n-1))^{k},
\end{eqnarray}
which implies
\begin{eqnarray*}
\frac{d}{dt}\mathcal{E}_n^k&\leq&C\mathcal{E}_n^{k}+C\.H(k,\mathcal{E}_{n-1}^{k+1},\N_2^0(n-1)),
\end{eqnarray*}
where 
\begin{eqnarray*}
&&H(k,\mathcal{E}_{n-1}^{k+1},\N_2^0(n-1)):=||\D_t^k\theta||^J_{L^J(\p M)}\\
&&+2^{Jk}\ex\left\{2J(1+k)\N_2^0(n-1)\right\}c(J(1+k)k)^{Jk}V(k,\mathcal{E}^{k+1}_{n-1},\N_2^0(n-1))^{Jk}.
\end{eqnarray*}
Gronwall's Inequality tells us
\begin{eqnarray}\label{J8}
\mathcal{E}_n^k(t)&\leq&\exp(Ct)\mathcal{E}_n^k(0)+\int_0^t\exp\{C(t-s)\}H(k,\mathcal{E}_{n-1}^{k+1}(s),\N_2^0(n-1)(s))\,ds.
\end{eqnarray}

In the next, we shall estimate $\mathcal{E}_n^k(0)$. Induction argument leads to
$$
(\D_t^k\Omega_n)(0,\.):=F^n_k:=\sum\limits_{p+q=k-1}(-\L_A)^p[\hat{F}^{n-1}_q(0,\.,F^{n-1}_0,\cdots,F_q^{n-1},\D_x F^{n-1}_0,\cdots,\D_x F_q^{n-1})]\s\mbox{with}\s F_0^n=\omega_0.
$$
Hence,
\begin{eqnarray*}
\mathcal{E}_n^k(0)=\frac{1}{J}\sum\limits_{r=0}^k||F^n_r||_{L^J}^J\leq\frac{1}{J}||\omega_0||^J_{L^J}+\frac{1}{J}\sum\limits_{r=1}^kr^{J-1}\sum\limits_{p+q=r-1}||\L_A^p[\hat{F}^{n-1}_q(0,\.,F^{n-1}_0,\cdots,F_q^{n-1},\D_x F^{n-1}_0,\cdots,\D_x F_q^{n-1})]||^J_{L^J}.
\end{eqnarray*}
Since $A$ is smooth on $[0,T]\times M$, we have
\begin{eqnarray}\label{sl}
||\D_x^{\kappa}\L_A^p\omega||_{L^J}\leq C(||A||_{W^{\kappa+2p,0}_{\infty}})||\omega||_{W^{\kappa+2p,0}_J}\s\s\mbox{for any}\s\omega\in E.
\end{eqnarray}
Hence,
\begin{eqnarray}\label{J}
\s\s\s||F^n_r||^J_{W^{\kappa,0}_J}\leq r^{J-1}C(||A||_{W^{\kappa+2r,0}_{\infty}})^J\sum\limits_{q=0}^{r-1}||\hat{F}^{n-1}_q(0,\.,F^{n-1}_0,\cdots,F_q^{n-1},\D_x F^{n-1}_0,\cdots,\D_x F_q^{n-1})||^J_{W^{\kappa+2r-2,0}_J}.
\end{eqnarray}
However,
\begin{eqnarray*}
&&|\D_x^j[\hat{F}^{n-1}_p(0,\.,F^{n-1}_0,\cdots,F_p^{n-1},\D_x F^{n-1}_0,\cdots,\D_x F_p^{n-1})]|\\
&\leq&\sum C_{abqr}(|\omega_0|,|\D_x\omega_0|)\prod\limits_{i=1}^q|\D_x^{c_i}F_{d_i}^{n-1}|^{\tau_i}\prod\limits_{l=1}^r|\D_x^{1+e_l}F_{h_l}^{n-1}|^{\mu_l}
\end{eqnarray*}
with
$$
a+\sum\limits_{i=1}^qd_i\tau_i+\sum\limits_{l=1}^rh_l\mu_l=p\s\mbox{and}\s b+\sum\limits_{i=1}^qc_i\tau_i+\sum\limits_{l=1}^r(1+e_l)\mu_l=j.
$$
H\"{o}lder's Inequality and Sobolev's Embedding give
\begin{eqnarray}\label{J2}
&&\s\s\s\s||\D_x^j[\hat{F}^{n-1}_p(0,\.,F^{n-1}_0,\cdots,F_p^{n-1},\D_x F^{n-1}_0,\cdots,\D_x F_p^{n-1})]||_{L^J}\\
&\leq&\sum ||C_{abqr}(|\omega_0|,|\D_x\omega_0|)||_{L^{\infty}}\prod\limits_{i=1}^qc(J(q+r)\tau_i)^{\tau_i}||F_{d_i}^{n-1}||^{\tau_i}_{W_J^{c_i+\lceil \delta_i\tau_i\rceil_J,0}}\prod\limits_{l=1}^rc(J(q+r)\mu_l)^{\mu_l}||F_{h_l}^{n-1}||^{\mu_l}_{W_J^{1+e_l+\lceil \alpha_l\mu_l\rceil_J,0}}\nonumber\\
&\leq&c(J(p+j)^2)^{p+j}\sum ||C_{abqr}(|\omega_0|,|\D_x\omega_0|)||_{L^{\infty}}\prod\limits_{i=1}^q||F_{d_i}^{n-1}||^{\tau_i}_{W_J^{c_i+1,0}}\prod\limits_{l=1}^r||F_{h_l}^{n-1}||^{\mu_l}_{W_J^{2+e_l,0}}\nonumber
\end{eqnarray}
where
$$
\delta_i=\alpha_l:=J(q+r).
$$
Substituting $(\ref{J2})$ into $(\ref{J})$ yields
\begin{eqnarray}\label{J3}
&&||F^n_r||^J_{W^{\kappa,0}_J}\leq r^{J-1}C(||A||_{W^{\kappa+2r,0}_{\infty}})^J\sum\limits_{q=0}^{r-1}\sum\limits_{j=0}^{\kappa+2r-2}2^{(q+j)J}c(J(q+j)^2)^{J(q+j)}\\
&&\sum\limits_{\substack{a+\sum\limits_{i=1}^fd_i\tau_i+\sum\limits_{l=1}^uh_l\mu_l=q\\b+\sum\limits_{i=1}^fc_i\tau_i+\sum\limits_{l=1}^u(1+e_l)\mu_l=j}}||C_{abfu}(|\omega_0|,|\D_x\omega_0|)||_{L^{\infty}}^J\prod\limits_{i=1}^f||F_{d_i}^{n-1}||^{J\tau_i}_{W_J^{j+1,0}}\prod\limits_{l=1}^u||F_{h_l}^{n-1}||^{J\mu_l}_{W_J^{1+j,0}}\nonumber.
\end{eqnarray}
Define $\mathcal{E}_n^{\kappa,k}:=\sum\limits_{r=0}^k||F^n_r||^J_{W^{\kappa,0}_J}$. $(\ref{J3})$ tells us
\begin{eqnarray}\label{J5}
1+\mathcal{E}_n^{\kappa,k}&\leq& k^{J+1}C(||A||_{W^{\kappa+2k,0}_{\infty}})^J\sum\limits_{q=0}^{k-1}\sum\limits_{j=0}^{\kappa+2k-2}2^{(q+j)J}c(J(q+j)^2)^{J(q+j)}\nonumber\\
&&\left(\sum\limits_{a+b+f+u=0}^{q+j}||C_{abfu}(|\omega_0|,|\D_x\omega_0|)||_{L^{\infty}}^J\right)\left(1+\mathcal{E}_{n-1}^{j+1,q}\right)^{k-1}\nonumber\\
&\leq&(\kappa+2k)k^{J+2}C(||A||_{W^{\kappa+2k,0}_{\infty}})^J2^{(\kappa+3k-3)J}c(J(\kappa+3k-3)^2)^{J(\kappa+3k-3)}\nonumber\\
&&\left(\sum\limits_{a+b+f+u=0}^{\kappa+3k-3}||C_{abfu}(|\omega_0|,|\D_x\omega_0|)||_{L^{\infty}}^J\right)\left(1+\mathcal{E}_{n-1}^{\kappa+2k-1,k-1}\right)^{k-1}.
\end{eqnarray}
From $\mathcal{E}^k_n(0)=\mathcal{E}_n^{0,k}/J$ it follows that
\begin{eqnarray*}
\mathcal{E}^k_n(0)&\leq& k^{J+3}C(||A||_{W^{2k,0}_{\infty}})^J2^{(3k-3)J}c(J(3k-3)^2)^{J(3k-3)}\\
&&\left(\sum\limits_{a+b+f+u=0}^{3k-3}||C_{abfu}(|\omega_0|,|\D_x\omega_0|)||_{L^{\infty}}^J\right)\left(1+\mathcal{E}_{n-1}^{2k-1,k-1}\right)^{k-1}.
\end{eqnarray*}
Defining $S(k):=\sum\limits_{a+b+f+u=0}^{k}||C_{abfu}(|\omega_0|,|\D_x\omega_0|)||_{L^{\infty}}^J$ leads to
\begin{eqnarray*}
\mathcal{E}^k_n(0)\leq k^{J+3}C(||A||_{W^{2k,0}_{\infty}})^J2^{(3k-3)J}c(J(3k-3)^2)^{J(3k-3)}S(3k-3)\left(1+\mathcal{E}_{n-1}^{2k-1,k-1}\right)^{k-1}.
\end{eqnarray*}
However, $(\ref{J5})$ gives
\begin{eqnarray*}
&&\frac{\log\left(1+\mathcal{E}_{n}^{\kappa,k}\right)}{k!}-(J+2)\frac{\log k}{k!}-\frac{\log(\kappa+2k)}{k!}-J\frac{\log C(||A||_{W^{\kappa+2k,0}_{\infty}})}{k!}-\frac{\kappa+3k-3}{k!}J\log2\\
&&-\frac{\log\left(S(\kappa+3k-3)\right)}{k!}-J(\kappa+3k-3)\frac{\log\left[c(J(\kappa+3k-3)^2)\right]}{k!}\leq\frac{\log\left(1+\mathcal{E}_{n-1}^{\kappa+2k-1,k-1}\right)}{(k-1)!}.
\end{eqnarray*}
From the definition it follows that $\mathcal{E}_0^{\kappa,k}=0$. Hence, if $n\leq k-1$, we have
\begin{eqnarray*}
&&\frac{\log\left(1+\mathcal{E}_{n}^{\kappa,k}\right)}{k!}\leq(J+2)\sum\limits_{j=0}^n\frac{\log(k-j)}{(k-j)!}+J\sum\limits_{j=0}^n\frac{\log C(||A||_{W^{\kappa+(2k-3)j+2k,0}_{\infty}})}{(k-j)!}\\
&&+J\log2\sum\limits_{j=0}^n\frac{\kappa+(2k-4)j+3k-3}{(k-j)!}+\sum\limits_{j=0}^n\frac{\log\left(S(\kappa+(2k-4)j+3k-3)\right)}{(k-j)!}\\
&&+\sum\limits_{j=0}^n\frac{\log[\kappa+(2k-3)j+2k]}{(k-j)!}+J\log2\sum\limits_{j=0}^n\frac{\kappa+(2k-4)j+3k-3}{(k-j)!}\\
&&+\sum\limits_{j=0}^nJ[\kappa+(2k-4)j+3k-3]\frac{\log\{c(J[\kappa+(2k-4)j+3k-3]^2)\}}{(k-j)!};
\end{eqnarray*}
otherwise, one are able to obtain
\begin{eqnarray}\label{wcx}
&&\frac{\log\left(1+\mathcal{E}_{n}^{\kappa,k}\right)}{k!}\leq(J+2)\sum\limits_{j=0}^{k-1}\frac{\log(k-j)}{(k-j)!}+J\sum\limits_{j=0}^{k-1}\frac{\log C(||A||_{W^{\kappa+(2k-3)j+2k,0}_{\infty}})}{(k-j)!}\\
&&+J\log2\sum\limits_{j=0}^{k-1}\frac{\kappa+(2k-4)j+3k-3}{(k-j)!}+\sum\limits_{j=0}^{k-1}\frac{\log\left(S(\kappa+(2k-4)j+3k-3)\right)}{(k-j)!}\nonumber\\
&&+\sum\limits_{j=0}^{k-1}\frac{\log[\kappa+(2k-3)j+2k]}{(k-j)!}+J\log2\sum\limits_{j=0}^{k-1}\frac{\kappa+(2k-4)j+3k-3}{(k-j)!}\nonumber\\
&&+\sum\limits_{j=0}^{k-1}J[\kappa+(2k-4)j+3k-3]\frac{\log\{c(J[\kappa+(2k-4)j+3k-3]^2)\}}{(k-j)!}\nonumber\\
&&+\log\left[1+\mathcal{E}_{n-k+1}^{\kappa+(2k-1)(k-1),1}\right].\nonumber
\end{eqnarray}
Thanks to $(\ref{J})$ and the definition of $\mathcal{E}_{n}^{\kappa,k}$, we obtain
$$
\mathcal{E}_{n}^{\kappa,1}\leq||\omega_0||^J_{W^{\kappa,0}_J}+C(||A||_{W_{\infty}^{\kappa+2,0}})^J||F(0,\.,\omega_0,\D_x\omega_0)||^J_{W^{\kappa,0}_J}:=\lambda(\kappa).
$$
Substituting the above upper bound into $(\ref{wcx})$ yields
\begin{eqnarray}\label{wcx2}
&&\frac{\log\left(1+\mathcal{E}_{n}^{\kappa,k}\right)}{k!}\leq(J+2)\sum\limits_{j=0}^{k-1}\frac{\log(k-j)}{(k-j)!}+J\sum\limits_{j=0}^{k-1}\frac{\log C(||A||_{W^{\kappa+(2k-3)j+2k,0}_{\infty}})}{(k-j)!}\\
&&+J\log2\sum\limits_{j=0}^{k-1}\frac{\kappa+(2k-4)j+3k-3}{(k-j)!}+\sum\limits_{j=0}^{k-1}\frac{\log\left(S(\kappa+(2k-4)j+3k-3)\right)}{(k-j)!}\nonumber\\
&&+\sum\limits_{j=0}^{k-1}\frac{\log[\kappa+(2k-3)j+2k]}{(k-j)!}+J\log2\sum\limits_{j=0}^{k-1}\frac{\kappa+(2k-4)j+3k-3}{(k-j)!}\nonumber\\
&&+\sum\limits_{j=0}^{k-1}J[\kappa+(2k-4)j+3k-3]\frac{\log\{c(J[\kappa+(2k-4)j+3k-3]^2)\}}{(k-j)!}\nonumber\\
&&+\log\left[1+\lambda(\kappa+(2k-1)(k-1))\right]:=\mu(\kappa,k).\nonumber
\end{eqnarray}
In conclusion, $(\ref{wcx2})$ always holds true whether $n\leq k-1$ or not, which implies
$$
\mathcal{E}^k_n(0)\leq\exp\{\mu(0,k)k!\}/J.
$$
\begin{remark}
Recalling the definition of $S(k)$ we know that if $S((2k-1)(k-1))<<1$, $\mu(0,k)$ may be negative.
\end{remark}

\textbf{Step 2: Estimating $\alpha_n$}

Taking $k$-times derivatives with respect to $t$ on both sides of $(\ref{eq:5})$ yields
\begin{eqnarray}\label{eq:3'}
\left\{
\begin{array}{rl}
&\D^{k+1}_t\alpha_n+\L_A(\D^k_t\alpha_n)=\hat{G}^{n-1}_k\\
&\alpha_n(0,\.)=0,\s\D^j_t\alpha_n(0,\.)=-\L_A[\D_t^{j-1}\alpha_n(0,\.)]+\hat{G}^{n-1}_{j-1}|_{t=0},\s\s\mbox{for}\s 1\leq j\leq k\\
&\D_{\vec{n}}\D^k_t\alpha_n=0
\end{array}
\right.
\end{eqnarray}
where $\hat{G}^{n-1}_k:=\hat{G}^{n-1}_k(t,x,\vec{\D}_t^k\Omega_{n-1},\D_x\vec{\D}_t^k\Omega_{n-1},\vec{\D}_t^k\alpha_{n-1},\D_x\vec{\D}_t^k\alpha_{n-1})$ and $\vec{\D}_t^k\omega:=(\omega,\D_t\omega,\cdots,\D_t^k\omega)$.
Defining
\begin{eqnarray*}
\A_n^k(t):=\frac{1}{J}\sum\limits_{i=0}^{k}\int\Big|\D_t^i\alpha_n\Big|^J
\end{eqnarray*}
and applying the same methods of \textbf{Step 1}, we have
\begin{eqnarray*}
\frac{d}{dt}\A_n^k\leq C\A_n^k+||\hat{G}_k^{n-1}||^J_{L^J}.
\end{eqnarray*}
Tedious computation gives
\begin{eqnarray*}
|\D_x^j\hat{G}^{n-1}_p|&\leq&\sum C_{abqr}(|\alpha_{n-1}|+|\Omega_{n-1}|,|\D_x\Omega_{n-1}|)\prod\limits_{l=1}^r|\D_x^{1+e_l}\D_t^{h_l}\Omega_{n-1}|^{\mu_l}\\
&&\prod\limits_{e=1}^q\left[\prod\limits_{u=1}^e|\D_x^{a_u}\D_t^{b_u}\alpha_{n-1}|^{\tau_u}\prod\limits_{o=1}^{q-e}\left(|\D_x^{f_o}\D_t^{g_o}\alpha_{n-1}|+|\D_x^{f_o}\D_t^{g_o}\Omega_{n-1}|\right)^{\nu_o}\right]\\
&&+\sum C_{\bar{a}\bar{b}\bar{q}\bar{r}}(|\Omega_{n-1}|,|\D_x\alpha_{n-1}|+|\D_x\Omega_{n-1}|)\prod\limits_{c=1}^{\bar{q}}|\D_x^{l_c}\D_t^{n_c}\Omega_{n-1}|^{\kappa_c}\\
&&\prod\limits_{v=1}^{\bar{r}}\left[\prod\limits_{w=1}^{\bar{r}-v}\left(|\D_x^{1+z_w}\D_t^{y_w}\alpha_{n-1}|+|\D_x^{1+z_w}\D_t^{y_w}\Omega_{n-1}|\right)^{\lambda_w}\prod\limits_{h=1}^v|\D_x^{1+x_h}\D_t^{i_h}\alpha_{n-1}|^{\bar{\mu}_h}\right]\\
&\leq&\sum\sum\limits_{i_1+i_2=1}^{\infty}2^{i_1}|C^{abqr}_{i_1i_2}|\.|\alpha_{n-1}|^{i_1}|\Omega_{n-1}|^{i_1}|\D_x\Omega_{n-1}|^{i_2}\prod\limits_{l=1}^r|\D_x^{1+e_l}\D_t^{h_l}\Omega_{n-1}|^{\mu_l}\\
&&\prod\limits_{e=1}^q\left[\prod\limits_{u=1}^e|\D_x^{a_u}\D_t^{b_u}\alpha_{n-1}|^{\tau_u}\prod\limits_{o=1}^{q-e}\left(2^{\nu_o-1}|\D_x^{f_o}\D_t^{g_o}\alpha_{n-1}|^{\nu_o}|\D_x^{f_o}\D_t^{g_o}\Omega_{n-1}|^{\nu_o}\right)\right]\\
&&+\sum\sum\limits_{\bar{i}_1+\bar{i}_2=1}^{\infty}C^{\bar{a}\bar{b}\bar{q}\bar{r}}_{\bar{i}_1\bar{i}_2}2^{\bar{i}_2}|\Omega_{n-1}|^{\bar{i}_1}|\D_x\alpha_{n-1}|^{\bar{i}_2}|\D_x\Omega_{n-1}|^{\bar{i}_2}\prod\limits_{c=1}^{\bar{q}}|\D_x^{l_c}\D_t^{n_c}\Omega_{n-1}|^{\kappa_c}\\
&&\prod\limits_{v=1}^{\bar{r}}\left[\prod\limits_{w=1}^{\bar{r}-v}\left(2^{\lambda_w}|\D_x^{1+z_w}\D_t^{y_w}\alpha_{n-1}|^{\lambda_w}|\D_x^{1+z_w}\D_t^{y_w}\Omega_{n-1}|^{\lambda_w}\right)\prod\limits_{h=1}^v|\D_x^{1+x_h}\D_t^{i_h}\alpha_{n-1}|^{\bar{\mu}_h}\right]
\end{eqnarray*}
with
$$
a+\sum\limits_{e=1}^q\left(\sum\limits_{o=1}^{q-e}g_o\nu_o+\sum\limits_{u=1}^eb_u\tau_u\right)+\sum\limits_{l=1}^rh_l\mu_l=p\s\mbox{and}\s b+\sum\limits_{l=1}^r(1+e_l)\mu_l+\sum\limits_{e=1}^q\left(\sum\limits_{o=1}^{q-e}f_o\nu_o+\sum\limits_{u=1}^ea_u\tau_u\right)=j
$$
and
$$
\bar{a}+\sum\limits_{c=1}^{\bar{q}}n_c\kappa_c+\sum\limits_{v=1}^{\bar{r}}\left(\sum\limits_{w=1}^{\bar{r}-v}y_w\lambda_w+\sum\limits_{h=1}^vi_h\bar{\mu}_h\right)=p\s\mbox{and}\s \bar{b}+\sum\limits_{c=1}^{\bar{q}}l_c\kappa_c+\sum\limits_{v=1}^{\bar{r}}\left[\sum\limits_{w=1}^{\bar{r}-v}(1+z_w)\lambda_w+\sum\limits_{h=1}^v(1+x_h)\bar{\mu}_h\right]=j.
$$
General Minkowski's Inequality and H\"{o}lder's Inequality lead to
\begin{eqnarray*}
&&||\D_x^j\hat{G}^{n-1}_p||_{L^J}\leq\sum\sum\limits_{i_1+i_2=1}^{\infty}2^{i_1}|C^{abqr}_{i_1i_2}|\.\Big|\Big|\,|\alpha_{n-1}|^{i_1}|\Omega_{n-1}|^{i_1}|\D_x\Omega_{n-1}|^{i_2}\prod\limits_{l=1}^r|\D_x^{1+e_l}\D_t^{h_l}\Omega_{n-1}|^{\mu_l}\\
&&\prod\limits_{e=1}^q\left[\prod\limits_{u=1}^e|\D_x^{a_u}\D_t^{b_u}\alpha_{n-1}|^{\tau_u}\prod\limits_{o=1}^{q-e}\left(2^{\nu_o}|\D_x^{f_o}\D_t^{g_o}\alpha_{n-1}|^{\nu_o}|\D_x^{f_o}\D_t^{g_o}\Omega_{n-1}|^{\nu_o}\right)\right]\Big|\Big|_{L^J}\\
&&+\sum\sum\limits_{\bar{i}_1+\bar{i}_2=1}^{\infty}C^{\bar{a}\bar{b}\bar{q}\bar{r}}_{\bar{i}_1\bar{i}_2}2^{\bar{i}_2}\.\Big|\Big|\,|\Omega_{n-1}|^{\bar{i}_1}|\D_x\alpha_{n-1}|^{\bar{i}_2}|\D_x\Omega_{n-1}|^{\bar{i}_2}\prod\limits_{c=1}^{\bar{q}}|\D_x^{l_c}\D_t^{n_c}\Omega_{n-1}|^{\kappa_c}\\
&&\prod\limits_{v=1}^{\bar{r}}\left[\prod\limits_{w=1}^{\bar{r}-v}\left(2^{\lambda_w}|\D_x^{1+z_w}\D_t^{y_w}\alpha_{n-1}|^{\lambda_w}|\D_x^{1+z_w}\D_t^{y_w}\Omega_{n-1}|^{\lambda_w}\right)\prod\limits_{h=1}^v|\D_x^{1+x_h}\D_t^{i_h}\alpha_{n-1}|^{\bar{\mu}_h}\right]\Big|\Big|_{L^J}\\
&&\leq\sum\sum\limits_{i_1+i_2=1}^{\infty}2^{i_1}|C^{abqr}_{i_1i_2}|\.||\alpha_{n-1}||_{L^{i_1\mathfrak{P}}}^{i_1}||\Omega_{n-1}||_{L^{i_1\mathfrak{P}}}^{i_1}||\D_x\Omega_{n-1}||_{L^{i_2\mathfrak{P}}}^{i_2}\prod\limits_{l=1}^r||\D_x^{1+e_l}\D_t^{h_l}\Omega_{n-1}||_{L^{\mu_l\mathfrak{P}}}^{\mu_l}\\
&&\prod\limits_{e=1}^q\left[\prod\limits_{u=1}^e||\D_x^{a_u}\D_t^{b_u}\alpha_{n-1}||_{L^{\tau_u\mathfrak{P}}}^{\tau_u}\prod\limits_{o=1}^{q-e}\left(2^{\nu_o}||\D_x^{f_o}\D_t^{g_o}\alpha_{n-1}||_{L^{\mathfrak{P}\nu_o}}^{\nu_o}||\D_x^{f_o}\D_t^{g_o}\Omega_{n-1}||_{L^{\mathfrak{P}\nu_o}}^{\nu_o}\right)\right]\\
&&+\sum\sum\limits_{\bar{i}_1+\bar{i}_2=1}^{\infty}2^{\bar{i}_2}|C^{\bar{a}\bar{b}\bar{q}\bar{r}}_{\bar{i}_1\bar{i}_2}|\.||\Omega_{n-1}||_{L^{\mathfrak{K}\bar{i}_1}}^{\bar{i}_1}||\D_x\alpha_{n-1}||_{L^{\mathfrak{K}\bar{i}_2}}^{\bar{i}_2}||\D_x\Omega_{n-1}||_{L^{\mathfrak{K}\bar{i}_2}}^{\bar{i}_2}\prod\limits_{c=1}^{\bar{q}}||\D_x^{l_c}\D_t^{n_c}\Omega_{n-1}||_{L^{\mathfrak{K}\kappa_c}}^{\kappa_c}\\
&&\prod\limits_{v=1}^{\bar{r}}\left[\prod\limits_{w=1}^{\bar{r}-v}\left(2^{\lambda_w}||\D_x^{1+z_w}\D_t^{y_w}\alpha_{n-1}||_{L^{\mathfrak{K}\lambda_w}}^{\lambda_w}||\D_x^{1+z_w}\D_t^{y_w}\Omega_{n-1}||_{L^{\mathfrak{K}\lambda_w}}^{\lambda_w}\right)\prod\limits_{h=1}^v||\D_x^{1+x_h}\D_t^{i_h}\alpha_{n-1}||_{L^{\mathfrak{K}\mu_h}}^{\bar{\mu}_h}\right]
\end{eqnarray*}
with
$$
\mathfrak{P}:=(3+r+3q^2/2-q/2)J\leq(3+\min\{j,p\}/2+3\min\{j,p\}^2/2)J\leq b(j+p)
$$
and
$$
\mathfrak{K}:=(3+\bar{q}+3\bar{r}^2/2-\bar{r}/2)J\leq(3+\min\{j,p\}/2+3\min\{j,p\}^2/2)J\leq b(j+p).
$$
Sobolev's Embedding implies
\begin{eqnarray}\label{jxw5}
&&||\D_x^j\hat{G}^{n-1}_p||_{L^J}\nonumber\\
&\leq&\sum\sum\limits_{i_1+i_2=1}^{\infty}2^{i_1}|C^{abqr}_{i_1i_2}|c(i_1\mathfrak{P})^{2i_1}c(i_2\mathfrak{P})^{i_2}\.||\alpha_{n-1}||_{W_J^{\lceil i_1\mathfrak{P}\rceil_J,0}}^{i_1}||\Omega_{n-1}||_{W_J^{\lceil i_1\mathfrak{P}\rceil_J,0}}^{i_1}||\Omega_{n-1}||_{W_J^{1+\lceil i_2\mathfrak{P}\rceil_J,0}}^{i_2}\nonumber\\
&&\prod\limits_{l=1}^rc(\mu_l\mathfrak{P})^{\mu_l}||\Omega_{n-1}||_{W_J^{1+e_l+\lceil\mu_l\mathfrak{P}\rceil_J,h_l}}^{\mu_l}\prod\limits_{e=1}^q\left[\prod\limits_{u=1}^ec(\tau_u\mathfrak{P})^{\tau_u}\prod\limits_{o=1}^{q-e}2^{\nu_o}c(\nu_o\mathfrak{P})^{2\nu_o}\right]\nonumber\\
&&\prod\limits_{e=1}^q\left[\prod\limits_{u=1}^e||\alpha_{n-1}||_{W_J^{a_u+\lceil\tau_u\mathfrak{P}\rceil_J,b_u}}^{\tau_u}\prod\limits_{o=1}^{q-e}\left(||\alpha_{n-1}||_{W_J^{f_o+\lceil\mathfrak{P}\nu_o\rceil_J,g_o}}^{\nu_o}||\Omega_{n-1}||_{W_J^{f_o+\lceil\mathfrak{P}\nu_o\rceil_J,g_o}}^{\nu_o}\right)\right]\nonumber\\
&&+\sum\sum\limits_{\bar{i}_1+\bar{i}_2=1}^{\infty}2^{\bar{i}_2}|C^{\bar{a}\bar{b}\bar{q}\bar{r}}_{\bar{i}_1\bar{i}_2}|c(\mathfrak{K}\bar{i}_1)^{\bar{i}_1}c(\mathfrak{K}\bar{i}_2)^{2\bar{i}_2}\.||\Omega_{n-1}||_{W_J^{\lceil\mathfrak{K}\bar{i}_1\rceil_J,0}}^{\bar{i}_1}||\alpha_{n-1}||_{W_J^{1+\lceil\mathfrak{K}\bar{i}_2\rceil_J,0}}^{\bar{i}_2}||\Omega_{n-1}||_{W_J^{1+\lceil\mathfrak{K}\bar{i}_2\rceil_J,0}}^{\bar{i}_2}\nonumber\\
&&\prod\limits_{c=1}^{\bar{q}}c(\kappa_c\mathfrak{K})^{\kappa_c}\prod\limits_{c=1}^{\bar{q}}||\Omega_{n-1}||_{W_J^{l_c+\lceil\mathfrak{K}\kappa_c\rceil_J,n_c}}^{\kappa_c}\prod\limits_{v=1}^{\bar{r}}\left[\prod\limits_{w=1}^{\bar{r}-v}2^{\lambda_w}c(\lambda_w\mathfrak{K})^{2\lambda_w}\prod\limits_{h=1}^{v}c(\bar{\mu}_h\mathfrak{K})^{\bar{\mu}_h}\right]\nonumber\\
&&\prod\limits_{v=1}^{\bar{r}}\left[\prod\limits_{w=1}^{\bar{r}-v}\left(||\alpha_{n-1}||_{W_J^{1+z_w+\lceil\mathfrak{K}\lambda_w\rceil_J,y_w}}^{\lambda_w}||\Omega_{n-1}||_{W_J^{1+z_w+\lceil\mathfrak{K}\lambda_w\rceil_J,y_w}}^{\lambda_w}\right)\prod\limits_{h=1}^v||\alpha_{n-1}||_{W_J^{1+x_h+\lceil\mathfrak{K}\bar{\mu}_h\rceil_J,i_h}}^{\bar{\mu}_h}\right]\nonumber\\
&\leq&\sum\sum\limits_{i_1+i_2=1}^{\infty}2^{i_1+i_2+2\min\{j,p\}}c((i_1+i_2)b(j,p))^{2(i_1+i_2)}|C^{abqr}_{i_1i_2}|c(\min\{j,p\}b(j,p))^{4\min\{j,p\}}\nonumber\\
&&||\alpha_{n-1}||_{W_J^{2,0}}^{i_1}||\Omega_{n-1}||_{W_J^{2,0}}^{i_1+i_2}||\Omega_{n-1}||_{W_J^{1+j,p}}^{2\min\{j,p\}}||\alpha_{n-1}||_{W_J^{j+1,p}}^{2\min\{j,p\}}\nonumber\\
&&+\sum\sum\limits_{\bar{i}_1+\bar{i}_2=1}^{\infty}2^{\bar{i}_1+\bar{i}_2+\min\{j,p\}}c((\bar{i}_1+\bar{i}_2)b(j,p))^{2(\bar{i}_1+\bar{i}_2)}|C^{\bar{a}\bar{b}\bar{q}\bar{r}}_{\bar{i}_1\bar{i}_2}|c(\min\{j,p\}b(j,p))^{4\min\{j,p\}}\nonumber\\
&&||\alpha_{n-1}||_{W_J^{2,0}}^{\bar{i}_2}||\Omega_{n-1}||_{W_J^{2,0}}^{\bar{i}_1+\bar{i}_2}||\Omega_{n-1}||_{W_J^{1+j,p}}^{2\min\{j,p\}}||\alpha_{n-1}||_{W_J^{1+j,p}}^{2\min\{j,p\}}\nonumber\\
&\leq&2^{p+j}c(\min\{j,p\}b(j,p))^{4\min\{j,p\}}\sum\limits_{i_1+i_2=1}^{\infty}2^{i_1+i_2+2\min\{j,p\}}c((i_1+i_2)b(j,p))^{2(i_1+i_2)}\sum\limits_{a+b+q+r=0}^{j+p}|C^{abqr}_{i_1i_2}|\nonumber\\
&&||\alpha_{n-1}||_{W_J^{2,0}}^{i_1+i_2}||\Omega_{n-1}||_{W_J^{2,0}}^{i_1+i_2}||\Omega_{n-1}||_{W_J^{1+j,p}}^{2\min\{j,p\}}||\alpha_{n-1}||_{W_J^{j+1,p}}^{2\min\{j,p\}}.
\end{eqnarray}
\textbf{Condition 1} tells us
\begin{eqnarray*}
\sum\limits_{a+b+q+r=0}^{j+p}|C^{abqr}_{i_1i_2}|\leq c((i_1+i_2)b(j+p+2))^{-2i_1-2i_2}(j+p+2)^{i_1+i_2}/(i_1+i_2)!
\end{eqnarray*}
Defining $\m^p_k(n):=||\alpha_{n}||_{W_J^{k,p}}$ and substituting the above inequality into $(\ref{jxw5})$, we obtain
\begin{eqnarray}\label{lwq}
||\D_x^j\hat{G}^{n-1}_p||_{L^J}&\leq& 8^{p+j}c((j+p)b(j+p))^{4(j+p)}\ex\left\{(2j+2p+4)||\alpha_{n-1}||_{W^{2,0}_J}||\Omega_{n-1}||_{W_J^{2,0}}\right\}\nonumber\\
&&\left(1+||\Omega_{n-1}||_{W_J^{1+j,p}}\right)^{2(j+p)}\left(1+||\alpha_{n-1}||_{W_J^{j+1,p}}\right)^{2(j+p)}\\
&=&8^{p+j}c((j+p)b(j+p))^{4(j+p)}\ex\left\{(2j+2p+4)\m_2^0(n-1)\N_2^0(n-1)\right\}\nonumber\\
&&\left[1+\N_{1+j}^p(n-1)\right]^{2(j+p)}\left[1+\m_{1+j}^p(n-1)\right]^{2(j+p)}\nonumber
\end{eqnarray}
and
\begin{eqnarray}\label{jbc}
||\hat{G}^{n-1}_p||_{L^J}&\leq& 8^{p}c(pb(p))^{4p}\ex\left\{(2p+4)||\alpha_{n-1}||_{W^{2,0}_J}||\Omega_{n-1}||_{W_J^{2,0}}\right\}\nonumber\\
&&\left(1+||\Omega_{n-1}||_{W_J^{1,p}}\right)^{2p}\left(1+||\alpha_{n-1}||_{W_J^{1,p}}\right)^{2p}\\
&=&8^{p}c(pb(p))^{4p}\ex\left\{(2p+4)\m_2^0(n-1)\N_2^0(n-1)\right\}\nonumber\\
&&\left[1+\N_{1}^p(n-1)\right]^{2p}\left[1+\m_{1}^p(n-1)\right]^{2p}.\nonumber
\end{eqnarray}
From $(\ref{jxw})$ it follows that
\begin{eqnarray*}
\N^p_1(n-1)&\leq& C(1,J)2^{2p-3}\left(1+\mathcal{E}^{p+1}_{n-1}\right)^{(p-2)/J}\c(J(1+p)^2)^{1+p}\ex\left\{(1+p)\N^0_{2}(n-1)\right\}\\
&:=&a(p,\mathcal{E}^{p+1}_{n-1},\N^0_{2}(n-1))
\end{eqnarray*}
Now we are going to estimate $\m_1^p(n-1)$. From Lemma \ref{lemma2.1} and $(\ref{eq:3'})$ it follows that
\begin{eqnarray}\label{11'}
&&||\D_t^p\alpha_{n-1}||_{W^{k,0}_J}\leq C(k,J)\left\{||\D_t^{p+1}\alpha_{n-1}||_{W^{k-1,0}_J}+(1+||\D_t^p\alpha_{n-1}||_{W^{k-1,0}_J})^{2k}\right\}\nonumber\\
&&+C(k,J)8^{p+k}c((k+p)b(k+p))^{4(k+p)}\ex\left\{(2k+2p)||\alpha_{n-1}||_{W^{2,0}_J}||\Omega_{n-1}||_{W_J^{2,0}}\right\}\\
&&\left(1+||\Omega_{n-1}||_{W_J^{k-1,p}}\right)^{2(k+p)}\left(1+||\alpha_{n-1}||_{W_J^{k-1,p}}\right)^{2(k+p)}.\nonumber
\end{eqnarray}
$(\ref{11'})$ leads to
\begin{eqnarray*}
&&\m^p_k(n-1)\leq C(k,J)\left\{\m^{p+1}_{k-1}(n-1)+(1+\m^p_{k-1}(n-1))^{2k}\right\}\\
&&+C(k,J)8^{p+k}c((k+p)b(k+p))^{4(k+p)}\ex\left\{(2k+2p)\m_2^0(n-1)\N_2^0(n-1)\right\}\\
&&\left(1+\N_{k-1}^p(n-1)\right)^{2(k+p)}\left(1+\m_{k-1}^p(n-1)\right)^{2(k+p)}.
\end{eqnarray*}
Specially we have
\begin{eqnarray*}
&&\m^p_1(n-1)\leq C(1,J)J^{1/J}(1+\A^{p+1}_{n-1})^{2/J}\\
&&+C(1,J)8^{p+1}c((1+p)b(1+p))^{4(1+p)}\ex\left\{(2+2p)\m_2^0(n-1)\N_2^0(n-1)\right\}\\
&&\left(1+\mathcal{E}^{p+1}_{n-1}\right)^{2(1+p)/J}\left(1+\A^{p+1}_{n-1}\right)^{2(1+p)/J}:=b(p,\A^{p+1}_{n-1},\mathcal{E}^{p+1}_{n-1},\m_2^0(n-1),\N_2^0(n-1)).
\end{eqnarray*}
Hence it is not difficult to get
\begin{eqnarray*}
\frac{d}{dt}\A_n^k\leq C\A_n^k+L(k,\N_2^0(n-1),\m_2^0(n-1),\mathcal{E}^{k+1}_{n-1},\A^{k+1}_{n-1}),
\end{eqnarray*}
where
\begin{eqnarray*}
&&L(k,\N_2^0(n-1),\m_2^0(n-1),\mathcal{E}^{k+1}_{n-1},\A^{k+1}_{n-1}):=8^{Jk}c(kb(k))^{4Jk}\ex\left\{J(2k+4)\m_2^0(n-1)\N_2^0(n-1)\right\}\\
&&\left[1+a(k,\mathcal{E}^{k+1}_{n-1},\N^0_{2}(n-1))\right]^{2Jk}\left[1+b(k,\A^{k+1}_{n-1},\mathcal{E}^{k+1}_{n-1},\m_2^0(n-1),\N_2^0(n-1))\right]^{2Jk},
\end{eqnarray*}
which implies
\begin{eqnarray*}
\A_n^k(t)\leq\int_0^tds\,\exp\{C(t-s)\}L(k,\N_2^0(n-1)(s),\m_2^0(n-1)(s),\mathcal{E}^{k+1}_{n-1}(s),\A^{k+1}_{n-1}(s)).
\end{eqnarray*}

Defining $\B_n^k(t):=\mathcal{E}_n^k(t)+\A_n^k(t)$
and 
$$
Q(k,\N_2^0(n-1),\m_2^0(n-1),\B^{k+1}_{n-1}):=L(k,\N_2^0(n-1),\m_2^0(n-1),\mathcal{B}^{k+1}_{n-1},\B^{k+1}_{n-1})+H(k,\mathcal{B}_{n-1}^{k+1},\N_2^0(n-1))
$$
leads to
\begin{eqnarray*}
\B_n^k(t)\leq\exp(Ct)\exp\{\mu(0,k)k!\}+\int_0^tds\,\exp\{C(t-s)\}Q(k,\N_2^0(n-1)(s),\m_2^0(n-1)(s),\B^{k+1}_{n-1}(s)),
\end{eqnarray*}
where we have used the fact that 
$$
\B_n^k(0)=\mathcal{E}_n^k(0)\leq\exp\{\mu(0,k)k!\}.
$$
Lemma $\ref{lemma2.1}$ and Sobolev's Interpolation tell us
\begin{eqnarray*}
\N^0_2(n-1)&\leq&C(2,J)\left\{1+||\D_t\Omega_{n-1}||_{L^J}+||F(t,\.,\Omega_{n-2},\D_x\Omega_{n-2})||_{L^J}+\e\N_2^0(n-1)+\e^{-1/2}||\Omega_{n-1}||_{L^J}\right\}\\
&\leq&C(2,J)\left\{1+||\D_t\Omega_{n-1}||_{L^J}+\ex[2\N_2^0(n-2)]+\e\N_2^0(n-1)+\e^{-1/2}||\Omega_{n-1}||_{L^J}\right\},
\end{eqnarray*}
where we have used $(\ref{ldy})$. Taking $\e:=2^{-1}C(2,J)^{-1}$ yields
\begin{eqnarray*}
\N^0_2(n-1)&\leq&2C(2,J)^2\left\{1+J^{1/J}\(\mathcal{E}^1_{n-1}\)^{1/J}+\ex[2\N_2^0(n-2)]\right\}\\
&\leq&2C(2,J)^2\left\{1+J^{1/J}\(\mathcal{B}^{1}_{n-1}\)^{1/J}+\ex[2\N_2^0(n-2)]\right\}.
\end{eqnarray*}
Similar method gives
\begin{eqnarray*}
\m^0_2(n-1)\leq2C(2,J)^2\left\{1+J^{1/J}\(\mathcal{B}^{1}_{n-1}\)^{1/J}+\ex[4\N_2^0(n-2)\m^0_2(n-2)]\right\}.
\end{eqnarray*}
Defining $\W_2^0(n):=\N_2^0(n)+\m^0_2(n)$, we have
\begin{eqnarray}\label{zy1}
\W^0_2(n)&\leq&4C(2,J)^2\left\{1+J^{1/J}\(\mathcal{B}^{1}_{n}\)^{1/J}+\ex[4\W_2^0(n-1)^2]\right\}
\end{eqnarray}
which implies
\begin{eqnarray}\label{drrl}
8\W^0_2(n)^2&\leq&2^9C(2,J)^4\left\{1+J^{2/J}\(\mathcal{B}^{1}_{n}\)^{2/J}+\ex[8\W_2^0(n-1)^2]/L\right\}
\end{eqnarray}
	Note that $\ex(x)\leq x$ provided $x\leq\log L$. We hope 
\begin{eqnarray}\label{lsnf}
8\W_2^0(n)^2\leq\log L\s\s\mbox{for all $n\in\mathbb{N}$}
\end{eqnarray}
and
\begin{eqnarray}\label{plznf}
8\W^0_2(n)^2&\leq&2^9C(2,J)^4\left\{1+J^{2/J}\(\mathcal{B}^{1}_{n}\)^{2/J}+8\W_2^0(n-1)^2/L\right\}\nonumber\\
&\leq&2^9C(2,J)^4\left\{1+J^{2/J}\(\mathcal{B}^{1}_{n}\)^{2/J}+\log L/L\right\}\leq\log L.
\end{eqnarray}

Letting $R(k,\W_2^0(n-1),\B^{k+1}_{n-1}):=Q(k,\W_2^0(n-1),\W_2^0(n-1),\B^{k+1}_{n-1})$, one are able to get
\begin{eqnarray}\label{zy2}
\B_n^k(t)\leq\exp\{Ct+\mu(0,k)k!\}+\exp(Ct)\int_0^tds\,R(k,\W_2^0(n-1)(s),\B^{k+1}_{n-1}(s)).
\end{eqnarray}
From the definition it follows that $\W_2^0(0)=||\omega_0||_{W_J^{2,0}}+||\Omega_1-\omega_0||_{W_J^{2,0}}$. Lemma $\ref{lemma5.2}$ tells us
\begin{eqnarray*}
&&\W_2^0(0)\leq\mathfrak{L}(t):=||\omega_0||_{W^{2,0}_J}\\
&&+e^{Ct}\int_0^tds\,\left\{||\L_A[F(s,\.,\omega_0,\D_x\omega_0)]-\L_A^2\omega_0||^J_{L^J}+||\D_{\vec{n}}[F(s,\.,\omega_0,\D_x\omega_0)]-\D_{\vec{n}}\L_A\omega_0||^J_{L^J(\p M)}\right\}
\end{eqnarray*}
and Lemma $\ref{lemma5.3}$ gives
\begin{eqnarray*}
&&\B_0^k(t)\leq||\omega_0||^J_{L^J}/J+Ce^{Ct}\left[\sum\limits_{r+q=0}^{k-1}||\L_A^q[\D_t^rF(0,\.,\omega_0,\D_x\omega_0)]||_{L^J}^J+\sum\limits_{i=0}^k||\L_A^i\omega_0||_{L^J}^J\right]\\
&&+Ce^{Ct}\int_0^t\left[\sum\limits_{i=0}^k||\D_t^iF(s,\.,\omega_0,\D_x\omega_0)||^J_{L^J}+\sum\limits_{i=0}^k||\D_t^{i}\theta(s,\.)||_{L^J(\p M)}^J\right]ds:=\mathfrak{B}^k(t).
\end{eqnarray*}
Define $\hat{\B}_n^k(t):=\sup\limits_{0\leq s\leq t}\B_n^k(s)$. As soon as $(\ref{lsnf})$ and $(\ref{plznf})$ hold true, we will obtain
\begin{eqnarray*}
\hat{\B}_n^k(t)\leq\exp\{Ct+\mu(0,k)k!\}+t\exp(Ct)R\left(k,[\log L/8]^{1/2},\hat{\B}^{k+1}_{n-1}(t)\right).
\end{eqnarray*}
From the definition it follows that
\begin{eqnarray*}
R(k,[\log L/8]^{1/2},\hat{\B}^{k+1}_{n-1}(t))\leq\Psi(L,k)\left[1+||\theta(t)||^J	_{W^{1,0}_J}+\hat{\B}^{k+1}_{n-1}(t)\right]^{18k^3}+||\D^k_t\theta(t)||^J_{L^J(\p M)}
\end{eqnarray*}
where 
$$
\Psi(L,k):=2^{27Jk^3}k^{8Jk^2}L^{3Jk^3}C(2,J)^{4Jk^2}c((1+k)b(1+k))^{8J(1+k)^3}J^{26Jk^3},
$$
which implies
\begin{eqnarray}\label{zy3}
1+||\theta(t)||^J	_{W^{1,0}_J}+\hat{\B}_n^k(t)&\leq&1+||\theta(t)||^J	_{W^{1,0}_J}+||\D^k_t\theta(t)||^J_{L^J(\p M)}+\exp\{Ct+\mu(0,k)k!\}\nonumber\\
&&+t\exp(Ct)\Psi(L,k)\left[1+||\theta(t)||^J	_{W^{1,0}_J}+\hat{\B}^{k+1}_{n-1}(t)\right]^{18k^3}\\
&\leq&1+||\theta(t)||^J	_{W^{1,0}_J}+||\D^k_t\theta(t)||^J_{L^J(\p M)}+\exp\{Ct+\mu(0,k)k!\}\nonumber\\
&&+(1+t)e^{Ct}\Psi(L,k)\left[1+||\theta(t)||^J	_{W^{1,0}_J}+\hat{\B}^{k+1}_{n-1}(t)\right]^{18k^3}\nonumber
\end{eqnarray}
Hence we get, for $k\geq1$,
\begin{eqnarray*}
[(k-1)!]^3\log\(1+||\theta(t)||^J	_{W^{1,0}_J}+\hat{\B}_n^k(t)\)\leq[(k-1)!]^3\tau(L,k,t)+18(k!)^3\log\left[1+||\theta(t)||^J	_{W^{1,0}_J}+\hat{\B}^{k+1}_{n-1}(t)\right],
\end{eqnarray*}
where 
\begin{eqnarray*}
\tau(L,k,t):=\log\left\{3+2||\theta(t)||^J_{W^{1,0}_J}+||\D_t^k\theta(t)||^J_{L^J}\right\}+\log\left[\Psi(L,k)(1+t)\right]+2Ct+\mu(0,k)k!.
\end{eqnarray*}
We rewrite the above estimation as
\begin{eqnarray*}
&&[(k+n-i-1)!]^3\log\(1+||\theta(t)||^J	_{W^{1,0}_J}+\hat{\B}^{k+n-i}_i(t)\)\\
&\leq&[(k+n-i-1)!]^3\tau(L,k+n-i,t)+18[(k+n-i)!]^3\log\left[1+||\theta(t)||^J	_{W^{1,0}_J}+\hat{\B}^{k+n-i+1}_{i-1}(t)\right]
\end{eqnarray*}
for $1\leq i\leq n$, which means
\begin{eqnarray}\label{lz}
&&[(k-1)!]^3\log\(1+||\theta(t)||^J	_{W^{1,0}_J}+\hat{\B}^{k}_n(t)\)\\
&\leq&\sum\limits_{i=1}^n18^{n-i}[(k+n-i-1)!]^3\tau(L,k+n-i,t)+18^n[(k+n-1)!]^3\log\left[1+||\theta(t)||^J	_{W^{1,0}_J}+\hat{\B}^{k+n}_{0}(t)\right]\nonumber.
\end{eqnarray}
From the above it follows that
\begin{eqnarray*}
&&\log\(1+||\theta(t)||^J	_{W^{1,0}_J}+\hat{\B}^{k}_n(t)\)\leq\sum\limits_{i=0}^{n-1}18^{i}\left[i!\binom{k-1+i}{k-1}\right]^3\tau(L,k+i,t)\\
&&+18^n\left[n!\binom{k-1+n}{k-1}\right]^3\log\left[1+||\theta(t)||^J	_{W^{1,0}_J}+\hat{\B}^{k+n}_{0}(t)\right].
\end{eqnarray*}
$(\ref{zy3})$ tells us
\begin{eqnarray*}
&&\log\left[1+||\theta(t)||^J	_{W^{1,0}_J}+\hat{\B}_{n+1}^k(t)\right]\leq\log t-\log(1+t)+\tau(L,k,t)+18k^3\log\left[1+||\theta(t)||^J	_{W^{1,0}_J}+\hat{\B}^{k+1}_{n}(t)\right]\\
&&\leq\sum\limits_{i=0}^{n-1}18^{i+1}\left[i!\binom{k+i}{k}k\right]^3\tau(L,k+1+i,t)+18^{n+1}\left[n!\binom{k+n}{k}k\right]^3\log\left[1+||\theta(t)||^J	_{W^{1,0}_J}+\hat{\B}^{k+1+n}_{0}(t)\right]\\
&&+\log t+\tau(L,k,t)\\
&&\leq\sum\limits_{i=0}^{n-1}18^{i+1}\left[i!\binom{k+i}{k}k\right]^3\tau(L,k+1+i,t)+18^{n+1}\left[n!\binom{k+n}{k}k\right]^3\log\left[1+||\theta(t)||^J	_{W^{1,0}_J}+\mathfrak{B}^{k+1+n}(t)\right]\\
&&+\log t+\tau(L,k,t).
\end{eqnarray*}
Specially we have, for $n\geq2$,
\begin{eqnarray}\label{kz}
&&\log\left[1+||\theta(t)||^J	_{W^{1,0}_J}+\hat{\B}_{n}^1(t)\right]\\
&&\leq\sum\limits_{i=0}^{n-2}18^{i+1}\left[(i+1)!\right]^3\tau(L,2+i,t)+18^{n}\left(n!\right)^3\log\left[1+||\theta(t)||^J	_{W^{1,0}_J}+\mathfrak{B}^{1+n}(t)\right]\nonumber\\
&&+\log t+\tau(L,1,t)\nonumber
\end{eqnarray}
and
\begin{eqnarray}\label{mz}
\log\(1+||\theta(t)||^J	_{W^{1,0}_J}+\hat{\B}^{1}_1(t)\)\leq\tau(L,1,t)+18\log\left[1+||\theta(t)||^J	_{W^{1,0}_J}+\mathfrak{B}^{2}(t)\right].
\end{eqnarray}
In the next, we shall determine $L$. Recalling $(\ref{lsnf})$ and $(\ref{plznf})$ we hope
$$
8\mathfrak{L}(t)^2\leq\log L
$$
and
\begin{eqnarray*}
&&2^9C(2,J)^4\left\{1+J^{2/J}\(\mathcal{B}^{1}_{n}\)^{2/J}+\log L/L\right\}\\
&\leq&J2^9C(2,J)^4\left(1+\hat{\B}^{1}_{n}\right)^{2/J}\leq J2^9C(2,J)^4\left(1+||\theta(t)||^J	_{W^{1,0}_J}+\hat{\B}^{1}_{n}\right)^{2/J}\leq\log L.
\end{eqnarray*}
Combining $(\ref{kz})$ and $(\ref{mz})$ we need
\begin{eqnarray}\label{yh}
&&\log J+9\log2+4\log C(2,J)+\frac{2}{J}\log\left[1+||\theta(t)||^J	_{W^{1,0}_J}+\hat{\B}_{n}^1(t)\right]\nonumber\\
&&\leq\frac{2}{J}\sum\limits_{i=0}^{n-2}18^{i+1}\left[(i+1)!\right]^3\tau(L,2+i,t)+\frac{2}{J}18^{n}\left(n!\right)^3\log\left[1+||\theta(t)||^J	_{W^{1,0}_J}+\mathfrak{B}^{1+n}(t)\right]\nonumber\\
&&+\frac{2}{J}\log t+\frac{2}{J}\tau(L,1,t)+\log J+9\log2+4\log C(2,J)\nonumber\\
&&\leq\log\log L\s\s\mbox{for $n\geq2$}
\end{eqnarray}
and
\begin{eqnarray}\label{yy}
&&\log J+9\log2+4\log C(2,J)+\frac{2}{J}\log\left[1+||\theta(t)||^J	_{W^{1,0}_J}+\hat{\B}_{1}^1(t)\right]\nonumber\\
&&\leq\log J+9\log2+4\log C(2,J)+\frac{2}{J}\tau(L,1,t)+\frac{36}{J}\log\left[1+||\theta(t)||^J	_{W^{1,0}_J}+\mathfrak{B}^{2}(t)\right]\nonumber\\
&&\leq\log\log L
\end{eqnarray}
and
\begin{eqnarray}\label{zz}
&&\log J+9\log2+4\log C(2,J)+\frac{2}{J}\log\left[1+||\theta(t)||^J	_{W^{1,0}_J}+\mathfrak{B}^1(t)\right]\nonumber\\
&&\leq\log\log L.
\end{eqnarray}
To ensure $(\ref{yh})$, $(\ref{yy})$ and $(\ref{zz})$ hold ture, we only have to make sure that
\begin{eqnarray*}
&&\sum\limits_{i=0}^{n-2}18^{i+1}\left[(i+1)!\right]^3\tau(L,2+i,t)+18^{n}\left(n!\right)^3\log\left[1+||\theta(t)||^J	_{W^{1,0}_J}+\mathfrak{B}^{1+n}(t)\right]\\
&&\leq18^n(n!)^3\left[\tau(L,n,t)+||\theta(t)||^J	_{W^{1,0}_J}+\mathfrak{B}^{1+n}(t)\right]\\
&&\leq18^n(n!)^3\sup\limits_{t\in[0,T)}\left[\tau(n,n,t)+||\theta(t)||^J	_{W^{1,0}_J}+\mathfrak{B}^{1+n}(t)\right]\\
&&\leq\tilde{C}<\infty
\end{eqnarray*}
where $\tilde{C}$ is independent of $n$. Recalling $(\ref{lz})$ gives
\begin{eqnarray*}
&&[(k-1)!]^3\log\(1+||\theta(t)||^J	_{W^{1,0}_J}+\hat{\B}^{k}_n(t)\)\\
&\leq&18^n[(k+n-1)!]^3\left[\tau(L,k+n-1,t)+||\theta(t)||^J	_{W^{1,0}_J}+\mathfrak{B}^{k+n}(t)\right]\\
&\leq&18^{k+n}[(k+n)!]^3\left[\tau(L,k+n,t)+||\theta(t)||^J	_{W^{1,0}_J}+\mathfrak{B}^{k+n}(t)\right]\\
&\leq&18^{k+n}[(k+n)!]^3\left[\tau(k+n,k+n,t)+||\theta(t)||^J	_{W^{1,0}_J}+\mathfrak{B}^{k+n}(t)\right]\leq\tilde{C}\s\s\mbox{for $t\in[0,T)$},
\end{eqnarray*}
which is equivalent to
\begin{eqnarray}\label{sml}
\hat{\B}^{k}_n(t)\leq\exp\{[(k-1)!]^{-3}\tilde{C}\}-||\theta(t)||^J	_{W^{1,0}_J}-1\s\s\mbox{for $t\in[0,T)$}.
\end{eqnarray}
Specially, we can get
\begin{eqnarray*}
\mathcal{E}^{1}_n(t)\leq e^{\tilde{C}}-||\theta(t)||^J	_{W^{1,0}_J}-1\s\s\mbox{for $t\in[0,T)$},
\end{eqnarray*}
which implies
$$
||\Omega_n(t)||^J_{L^J(M)}+||\L_A\Omega_n(t)||^J_{L^J(M)}\leq\tilde{E}(||\theta(t)||^J	_{W^{1,0}_J},\tilde{C}).
$$
From $(\ref{sml})$ it follows that we have to ensure
\begin{eqnarray*}
||\theta(t)||^J	_{W^{1,0}_J}<\exp\{[(k-1)!]^{-3}\tilde{C}\}-1\s\s\mbox{for $t\in[0,T)$}.
\end{eqnarray*}
From Banach's Fixed Point Theorem it follows that there is a unique solution $\omega\in L^{\infty}([0,T),W^{2,k}_J)$ to $(\ref{eq:2})$ for small $T>0$.

At last, we use $(\ref{eq:2})$ and Lemma \ref{lemma2.1} to transform the time derivatives to spatial derivatives. That is to say, $\omega$ lies in $\bigcap\limits_{r+2l\leq 2k+2}L^{\infty}([0,T),W^{r,l}_J)$. 
\end{proof}
\section{Appendix}
\begin{lemma}\label{lml}
If $x,y\geq\log2$, then we have $\ex(x+y)\geq \ex(x)+\ex(y)$.
\end{lemma}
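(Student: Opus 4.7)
The lemma is a clean algebraic fact about the function $\ex(x)=[\exp(x)-1]/L$, and I would not expect any real obstacle; the whole content is a single factoring identity together with a sign check.

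My plan is to reduce the desired inequality $\ex(x+y)\geq\ex(x)+\ex(y)$ to a product of two nonnegative factors. Multiplying through by $L>0$ and using the definition of $\ex$, the inequality is equivalent to
\begin{equation*}
\exp(x+y)-1\geq\bigl[\exp(x)-1\bigr]+\bigl[\exp(y)-1\bigr],
\end{equation*}
i.e.\ to $\exp(x)\exp(y)\geq\exp(x)+\exp(y)-1$. Rearranging yields the compact identity
\begin{equation*}
\exp(x+y)-\exp(x)-\exp(y)+1=\bigl(\exp(x)-1\bigr)\bigl(\exp(y)-1\bigr),
\end{equation*}
so that
\begin{equation*}
\ex(x+y)-\ex(x)-\ex(y)=\frac{\bigl(\exp(x)-1\bigr)\bigl(\exp(y)-1\bigr)}{L}.
\end{equation*}

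Now I would invoke the hypothesis $x,y\geq\log 2$. This gives $\exp(x)\geq 2$ and $\exp(y)\geq 2$, hence $\exp(x)-1\geq 1$ and $\exp(y)-1\geq 1$. The right-hand side of the identity above is therefore at least $1/L>0$, which proves the claim. (In fact the same argument shows that the hypothesis could be weakened to $x,y\geq 0$, with the strict lower bound $1/L$ replaced by $0$; the choice $\log 2$ is presumably enough slack for the applications of Lemma~\ref{lml} elsewhere in the paper.) The proof is therefore a two-line verification: state the factoring identity, then plug in $\exp(x),\exp(y)\geq 2$.
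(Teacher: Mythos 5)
Your proof is correct: the factoring identity $\ex(x+y)-\ex(x)-\ex(y)=(\exp(x)-1)(\exp(y)-1)/L$ together with $L>0$ is exactly the elementary verification the paper alludes to (it writes no proof, leaving it to the reader). Your side remark that the hypothesis could be weakened to $x,y\geq 0$ is also accurate, since nonnegativity of both factors is all that is needed.
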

\begin{proof}
The proof is elementary and we leave it to the readers.
\end{proof}

\begin{lemma}\label{lemma2.1}
Suppose $A$ is smooth and positive-definite on $[0,T)\times M$. Then we have
$$
||\omega||_{W^{k+2,0}_p}\leq C(k,p)\left\{||\L_A\omega||_{W^{k,0}_p}+||\D_{\vec{n}}\omega||_{W^{k+1,0}_p}+(1+||\omega||_{W^{k+1,0}_p})^{2k+1}\right\}
$$
for $p\in(1,\infty)$, where the constant $C(k,p)$ also depends on $A$ and is non-decreasing with respect to $k$.
\end{lemma}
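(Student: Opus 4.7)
I would argue by induction on $k$, the base case $k=0$ being supplied by the classical $L^p$-theory of Neumann elliptic boundary-value problems. In the boundary-straightening charts $(V_j,\Psi_j)$ used in the proof of Theorem~\ref{theorem2}, $\L_A$ localises to a uniformly elliptic divergence-form system with smooth coefficients -- positive-definiteness of $A$ giving uniform ellipticity -- and the standard Calder\'on--Zygmund / Agmon--Douglis--Nirenberg estimate for the Neumann problem yields
$$
||\omega||_{W^{2,p}}\leq C(p,A)\left\{||\L_A\omega||_{L^p}+||\D_{\vec{n}}\omega||_{W^{1,p}(\p M)}+||\omega||_{L^p}\right\}.
$$
Since $||\omega||_{L^p}\leq 1+||\omega||_{W^{1,p}}$, this covers the base case.

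For the inductive step, feed $\D_x^k\omega$ into the base case. The Leibniz rule applied to $\L_A\omega=-g^{-1}\sharp\D_x[A(\D_x\omega)]$ gives
$$
\L_A(\D_x^k\omega)=\D_x^k(\L_A\omega)+\sum_{1\leq|\alpha|\leq k+1}c_\alpha(\D_x^\alpha A)\.\D_x^{k+2-|\alpha|}\omega,
$$
every summand on the right, apart from $\D_x^k(\L_A\omega)$, involving at most $k+1$ derivatives of $\omega$ and derivatives of $A$ up to order $k+1$; hence the commutator is bounded in $L^p$ by $C(||A||_{W^{k+1,0}_\infty})||\omega||_{W^{k+1,p}}$. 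An analogous expansion governs the boundary term when $\D_x^k$ is purely tangential to $\p M$, the commutator $[\D_{\vec{n}},\D_x^k]$ being of order $\leq k$ and of the same controllable type; trace and interpolation inequalities absorb any top-order contribution into the left-hand side.

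The main obstacle is recovering boundary control for derivatives containing normal components, since $\D_{\vec{n}}\D_x^k\omega$ with a normal factor is not furnished by the data. The remedy, mirroring the technique used in the proof of Theorem~\ref{theorem2}, is to differentiate tangentially first and then exploit the positive-definiteness of $A$ to solve algebraically for $\p_m^2\omega$ from the local expression of $\L_A\omega=f$, trading each extra normal derivative for a combination of tangential derivatives, $\L_A\omega$, and lower-order terms; the normal bootstrap is iterated until the full $W^{k+2,p}$ norm is controlled. Combining tangential and normal pieces with the inductive hypothesis yields the linear estimate
$$
||\omega||_{W^{k+2,p}}\leq C(k,p,A)\left\{||\L_A\omega||_{W^{k,p}}+||\D_{\vec{n}}\omega||_{W^{k+1,p}(\p M)}+1+||\omega||_{W^{k+1,p}}\right\},
$$
which is a fortiori bounded by the claimed form since $1+||\omega||_{W^{k+1,p}}\leq(1+||\omega||_{W^{k+1,p}})^{2k+1}$. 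The exponent $2k+1$ in the statement is a generous safety buffer that accommodates the successive Young--interpolation absorptions invoked when the estimate is iterated in the quasi-linear scheme of Section~\ref{section3}, and the monotonicity of $C(k,p)$ in $k$ is evident from the proof because at each inductive step only norms of $A$ of order $\leq k+1$ enter.
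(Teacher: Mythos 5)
Your plan is essentially the classical higher-regularity argument for a linear elliptic Neumann problem (base-case Calder\'on--Zygmund estimate, then induction on $k$ via commutators $[\L_A,\D_x^k]$ and a tangential-then-normal bootstrap at the boundary), and in outline it works; since your conclusion is linear in $\|\omega\|_{W^{k+1,0}_p}$ and $1+x\leq(1+x)^{2k+1}$, it even yields a stronger estimate than the lemma asserts. The paper proceeds quite differently: it uses the positive-definiteness of $A$ to invert $A$ pointwise and convert the problem into an estimate of $\|\slashed{\Delta}\omega\|_{W^{k,0}_p}$ by $\|\L_A\omega\|_{W^{k,0}_p}$ plus Leibniz-type remainder terms, which it controls by H\"older's inequality and Sobolev embeddings with carefully tuned exponents; this product analysis is precisely where the power $(1+\|\omega\|_{W^{k+1,0}_p})^{2k+1}$ comes from, so it is not a ``safety buffer'' but an artifact of that route. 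The entire boundary-regularity question -- the part you identify as the main obstacle and propose to handle by hand with the normal bootstrap -- is then outsourced in one stroke to Theorem 3.2 of the cited Uhlenbeck-compactness reference, which supplies $\|\omega\|_{W^{k+2,0}_p}\lesssim\|\slashed{\Delta}\omega\|_{W^{k,0}_p}+\|\D_{\vec{n}}\omega\|_{W^{k+1,0}_p}+\cdots$ for the connection Laplacian at all orders. So the trade-off is: your route is self-contained in its elliptic machinery but must execute the delicate boundary bootstrap at every order (the step you currently only sketch, and which would need the same care as the $k=m$ difference-quotient argument in Section 2), whereas the paper's route avoids that entirely at the cost of a weaker, polynomially nonlinear bound and a somewhat laborious combinatorial estimate of the commutator terms.
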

\begin{proof}
It is obvious to get
$$
||A(\slashed{\Delta}\omega)||_{W^{k,0}_p}\leq||\L_A\omega||_{W^{k,0}_p}+||g^{-1}\sharp(\D_xA)(\D_x\omega)||_{W^{k,0}_p}.
$$
Induction argument gives
$$
|\D_x^j\slashed{\Delta}\omega|\lesssim|A(\D_x^j\slashed{\Delta}\omega)|\lesssim|\D_x^j[A(\slashed{\Delta}\omega)]|+\sum\prod\limits_{i_j=1}^{s_j}|\D_x^{p_{i_j}}\omega|^{q_{i_j}}|\D_x^{r_{i_j}}\slashed{\Delta}\omega|\s\mbox{with}\s j=\sum\limits_{i_j=1}^{s_j}(p_{i_j}q_{i_j}+r_{i_j}),\s p_{i_j}q_{i_j}\geq 1
$$
and
$$
|\D_x^j[g^{-1}\sharp(\D_xA)(\D_x\omega)]|\lesssim\sum\prod\limits_{\tilde{i}_j=1}^{\tilde{s}_j}|\D_x^{\tilde{p}_{\tilde{i}_j}}\omega|^{\tilde{q}_{\tilde{i}_j}}|\D_x^{\tilde{r}_{\tilde{i}_j}+1}\omega|\s\mbox{with}\s j=\sum\limits_{\tilde{i}_j=1}^{\tilde{s}_j}(\tilde{p}_{\tilde{i}_j}\tilde{q}_{\tilde{i}_j}+\tilde{r}_{\tilde{i}_j}),\s\tilde{p}_{\tilde{i}_j}\geq1.
$$
H\"{o}lder's Inequality leads to
$$
||\D_x^j\slashed{\Delta}\omega||_{L^p}\lesssim||\D_x^j[A(\slashed{\Delta}\omega)]||_{L^p}+\sum\prod\limits_{i_j=1}^{s_j}||\D_x^{p_{i_j}}\omega||_{L^{q_{i_j}\tau_{i_j}}}^{q_{i_j}}||\D_x^{r_{i_j}}\slashed{\Delta}\omega||_{L^{\mu_{i_j}}}
$$
and
$$
||\D_x^j[g^{-1}\sharp(\D_xA)(\D_x\omega)]||_{L^p}\lesssim\sum\prod\limits_{\tilde{i}_j=1}^{\tilde{s}_j}||\D_x^{\tilde{p}_{\tilde{i}_j}}\omega||_{L^{\tilde{\tau}_{\tilde{i}_j}\tilde{q}_{\tilde{i}_j}}}^{\tilde{q}_{i_j}}||\D_x^{\tilde{r}_{\tilde{i}_j}+1}\omega||_{L^{\tilde{\mu}_{\tilde{i}_j}}}
$$
with $\mu_{i_j}, \tau_{i_j}, \tilde{\mu}_{\tilde{i}_j}, \tilde{\tau}_{\tilde{i}_j}$ to be determined and satisfing $\sum\limits_{i_j=1}^{s_j}(1/\mu_{i_j}+1/\tau_{i_j})=1/p$ and $\sum\limits_{\tilde{i}_j=1}^{\tilde{s}_j}(1/\tilde{\mu}_{\tilde{i}_j}+1/\tilde{\tau}_{\tilde{i}_j})=1/p$.
Combining the above conclusions we arrive at
\begin{eqnarray}\label{eq:16}
&&||\slashed{\Delta}\omega||_{W^{k,0}_p}\lesssim||\L_A\omega||_{W^{k,0}_p}+\sum\limits_{j=1}^k\sum\limits_{\substack{\sum\limits_{i_j=1}^{s_j}(p_{i_j}q_{i_j}+r_{i_j})=j\\ p_{i_j}q_{i_j}\geq 1}}\prod\limits_{i_j=1}^{s_j}||\D_x^{p_{i_j}}\omega||_{L^{q_{i_j}\tau_{i_j}}}^{q_{i_j}}||\D_x^{r_{i_j}+2}\omega||_{L^{\mu_{i_j}}}\\
&&+\sum\limits_{j=0}^k\sum\limits_{\substack{\sum\limits_{\tilde{i}_j=1}^{\tilde{s}_j}(\tilde{p}_{\tilde{i}_j}\tilde{q}_{\tilde{i}_j}+\tilde{r}_{\tilde{i}_j})=j\\ \tilde{p}_{\tilde{i}_j}\geq 1}}\prod\limits_{\tilde{i}_j=1}^{\tilde{s}_j}||\D_x^{\tilde{p}_{\tilde{i}_j}}\omega||_{L^{\tilde{\tau}_{\tilde{i}_j}\tilde{q}_{\tilde{i}_j}}}^{\tilde{q}_{\tilde{i}_j}}||\D_x^{\tilde{r}_{\tilde{i}_j}+1}\omega||_{L^{\tilde{\mu}_{\tilde{i}_j}}}\nonumber.
\end{eqnarray}
From Sobolev's Embedding it follows that
\begin{eqnarray}\label{eq:15}
||\D_x^{p_{i_j}}\omega||_{L^{\tau_{i_j}q_{i_j}}}^{q_{i_j}}\lesssim||\omega||^{q_{i_j}}_{W_p^{p_{i_j}+\lceil q_{i_j}\tau_{i_j}\rceil_p,0}}\s\s\mbox{and}\s\s||\D_x^{\tilde{p}_{\tilde{i}_j}}\omega||_{L^{\tilde{\tau}_{\tilde{i}_j}\tilde{q}_{\tilde{i}_j}}}^{\tilde{q}_{\tilde{i}_j}}\lesssim||\omega||^{\tilde{q}_{\tilde{i}_j}}_{W_p^{\tilde{p}_{\tilde{i}_j}+\lceil\tilde{q}_{\tilde{i}_j}\tilde{\tau}_{\tilde{i}_j}\rceil_p,0}}
\end{eqnarray}
\begin{eqnarray}\label{eq:14}
||\D_x^{r_{i_j}+2}\omega||_{L^{\mu_{i_j}}}\lesssim||\omega||_{W_p^{r_{i_j}+2+\lceil \mu_{i_j}\rceil_p,0}}\s\s\mbox{and}\s\s||\D_x^{\tilde{r}_{\tilde{i}_j}+1}\omega||_{L^{\tilde{\mu}_{\tilde{i}_j}}}\lesssim||\omega||_{W_p^{\tilde{r}_{\tilde{i}_j}+1+\lceil\tilde{\mu}_{\tilde{i}_j}\rceil_p,0}}.
\end{eqnarray}

Let us focus on the case when $j=k$, $i_j=s_j=1$, $\tilde{i}_j=\tilde{s}_j=1$, $r_{i_j}=k-1$, $\tilde{r}_{\tilde{i}_j}=k$, $p_{i_j}=1$, $\tilde{p}_{\tilde{i}_j}=0$, $\tilde{q}_{\tilde{i}_j}=0$ and $q_{i_j}=1$. We need $\lceil \mu_{i_j}\rceil_p=0$ which is equivalent to $1/\mu_{i_j}=1/p$ and $\lceil \tilde{\mu}_{\tilde{i}_j}\rceil_p=0$ which is equivalent to $1/\tilde{\mu}_{\tilde{i}_j}=1/p$.

As for the other cases, we need
$$p_{i_j}+\lceil q_{i_j}\tau_{i_j}\rceil_p\leq k+1,\s\tilde{p}_{\tilde{i}_j}+\lceil\tilde{q}_{\tilde{i}_j}\tilde{\tau}_{\tilde{i}_j}\rceil_p\leq k+1,\s r_{i_j}+\lceil \mu_{i_j}\rceil_p\leq k-1,\s \tilde{r}_{\tilde{i}_j}+\lceil\tilde{\mu}_{\tilde{i}_j}\rceil_p\leq k.$$
In order to ensure the above inequalities, we only need
$$
q_{i_j}/p-q_{i_j}(k-p_{i_j}+1)/m\leq1/\tau_{i_j},\s\s\tilde{q}_{\tilde{i}_j}/p-\tilde{q}_{\tilde{i}_j}(k-\tilde{p}_{\tilde{i}_j}+1)/m\leq 1/\tilde{\tau}_{\tilde{i}_j},
$$
$$
1/p-(k-r_{i_j}-1)/m\leq 1/\mu_{i_j},\s\s1/p-(k-\tilde{r}_{\tilde{i}_j})/m\leq 1/\tilde{\mu}_{\tilde{i}_j}.
$$
Simple computation gives
$$
\sum\limits_{i_j=1}^{s_j}\left[1/p-(k-r_{i_j}-1)/m+q_{i_j}/p-q_{i_j}(k-p_{i_j}+1)/m\right]=[1/p-(k+1)/m](s_j+\sum\limits_{i_j=1}^{s_j}q_{i_j})+(j+2)s_j/m
$$
and
$$
\sum\limits_{\tilde{i}_j=1}^{\tilde{s}_j}\left[1/p-(k-\tilde{r}_{\tilde{i}_j})/m+\tilde{q}_{\tilde{i}_j}/p-\tilde{q}_{\tilde{i}_j}(k-\tilde{p}_{\tilde{i}_j}+1)/m\right]=[1/p-(k+1)/m](\tilde{s}_j+\sum\limits_{\tilde{i}_j=1}^{\tilde{s}_j}\tilde{q}_{\tilde{i}_j})+(j+1)\tilde{s}_j/m.
$$
We hope
\begin{eqnarray}\label{eq:18}
[1/p-(k+1)/m](s_j+\sum\limits_{i_j=1}^{s_j}q_{i_j})+(j+2)s_j/m\leq 1/p
\end{eqnarray}
and
\begin{eqnarray}\label{eq:17}
[1/p-(k+1)/m](\tilde{s}_j+\sum\limits_{\tilde{i}_j=1}^{\tilde{s}_j}\tilde{q}_{\tilde{i}_j})+(j+1)\tilde{s}_j/m\leq 1/p.
\end{eqnarray}
However,
\begin{eqnarray*}
&&[1/p-(k+1)/m](s_j+\sum\limits_{i_j=1}^{s_j}q_{i_j})+(j+2)s_j/m-1/p\\
&\leq&[1/p-(k+1)/m](s_j+\sum\limits_{i_j=1}^{s_j}q_{i_j})+(k+2)s_j/m-1/p\\
&=&\sum\limits_{i_j=1}^{s_j}\{q_{i_j}[1/p-(k+1)/m]+1/m+1/p\}-1/p\\
&\leq&\sum\limits_{i_j=1}^{s_j}[1/p-(k+1)/m+1/m+1/p]-1/p\\
&=&(2/p-k/m)s_j-1/p\leq0
\end{eqnarray*}
where we have used the fact $k\geq 2m/p$. Similar trick leads to the desired inequality $(\ref{eq:17})$.

Substituting $(\ref{eq:15})$ and $(\ref{eq:14})$ into $(\ref{eq:16})$ yields
\begin{eqnarray*}
&&||\slashed{\Delta}\omega||_{W^{k,0}_p}\lesssim||\L_A\omega||_{W^{k,0}_p}+\sum\limits_{j=1}^k\sum\limits_{\substack{\sum\limits_{i_j=1}^{s_j}(p_{i_j}q_{i_j}+r_{i_j})=j\\ p_{i_j}q_{i_j}\geq 1}}||\omega||_{W^{k+1,0}_p}^{\sum\limits_{i_j=1}^{s_j}q_{i_j}+s_j}+\sum\limits_{j=0}^k\sum\limits_{\substack{\sum\limits_{\tilde{i}_j=1}^{\tilde{s}_j}(\tilde{p}_{\tilde{i}_j}\tilde{q}_{\tilde{i}_j}+\tilde{r}_{\tilde{i}_j})=j\\ \tilde{p}_{\tilde{i}_j}\geq1}}||\omega||_{W_p^{k+1,0}}^{\sum\limits_{\tilde{i}_j=1}^{\tilde{s}_j}\tilde{q}_{\tilde{i}_j}+\tilde{s}_j}\nonumber\\
&\leq&||\L_A\omega||_{W^{k,0}_p}+\sum\limits_{j=1}^k\sum\limits_{\substack{\sum\limits_{i_j=1}^{s_j}(p_{i_j}q_{i_j}+r_{i_j})=j\\ p_{i_j}q_{i_j}\geq 1}}(1+||\omega||_{W^{k+1,0}_p})^{\sum\limits_{i_j=1}^{s_j}q_{i_j}+s_j}+\sum\limits_{j=0}^k\sum\limits_{\substack{\sum\limits_{\tilde{i}_j=1}^{\tilde{s}_j}(\tilde{p}_{\tilde{i}_j}\tilde{q}_{\tilde{i}_j}+\tilde{r}_{\tilde{i}_j})=j\\ \tilde{p}_{\tilde{i}_j}\geq1}}(1+||\omega||_{W_p^{k+1,0}})^{\sum\limits_{\tilde{i}_j=1}^{\tilde{s}_j}\tilde{q}_{\tilde{i}_j}+\tilde{s}_j}\nonumber\\
&:=&||\L_A\omega||_{W^{k,0}_p}+I_{10}+I_{11}.
\end{eqnarray*}
For $I_{10}$, $\sum\limits_{i_j=1}^{s_j}(p_{i_j}q_{i_j}+r_{i_j})=j$ and $p_{i_j}q_{i_j}\geq 1$ implies $s_j\leq j$ and $\sum\limits_{i_j=1}^{s_j}q_{i_j}\leq j$. Hence,
$$
I_{10}\leq\sum\limits_{j=1}^kj(1+||\omega||_{W^{k+1,0}_p})^{2j}\leq k^2(1+||\omega||_{W^{k+1,0}_p})^{2k}.
$$
Similarly, $\sum\limits_{\tilde{i}_j=1}^{\tilde{s}_j}(\tilde{p}_{\tilde{i}_j}\tilde{q}_{\tilde{i}_j}+\tilde{r}_{\tilde{i}_j})=j$ and $\tilde{p}_{\tilde{i}_j}\geq1$ implies $\tilde{s}_j\leq j+1$ and $\sum\limits_{\tilde{i}_j=1}^{\tilde{s}_j}\tilde{q}_{\tilde{i}_j}\leq j$. Hence,
$$
I_{11}\leq\sum\limits_{j=0}^k(j+1)(1+||\omega||_{W^{k+1,0}_p})^{2j+1}\leq(k+1)^2(1+||\omega||_{W^{k+1,0}_p})^{2k+1}.
$$
In summary, we obtain
$$
||\slashed{\Delta}\omega||_{W^{k,0}_p}\lesssim||\L_A\omega||_{W^{k,0}_p}+(1+||\omega||_{W^{k+1,0}_p})^{2k+1}.
$$
From Theorem 3.2 of \cite{W} it follows that
$$
||\omega||_{W^{k+2,0}_p}\lesssim||\L_A\omega||_{W^{k,0}_p}+||\D_{\vec{n}}\omega||_{W^{k+1,0}_p}+(1+||\omega||_{W^{k+1,0}_p})^{2k+1}.
$$
\end{proof}
\begin{lemma}\label{lemma5.2}
Suppose $\omega$ is smooth and satisfies the following equation
\begin{eqnarray*}
\left\{
\begin{array}{rl}
&\D_t\omega+\L_A\omega=f\\
&\omega(0,\.)=\omega_0,\s\s\D_{\vec{n}}\omega=\theta,
\end{array}
\right.
\end{eqnarray*}
then, for $p\in[2,\infty)$, we have
\begin{eqnarray*}
||\L_A\omega||^p_{L^p}\leq e^{Ct}||\L_A\omega_0||_{L^p}^{p}+\int_0^tds\,e^{C(t-s)}\left(||\L_Af||_{L^p}^{p}+||\D_{\vec{n}}f-\D_t\theta||_{L^p(\p M)}^{p}\right).
\end{eqnarray*}
\end{lemma}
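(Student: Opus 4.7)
The plan is to turn the bound into a standard $L^p$-energy estimate for the auxiliary section $v:=\L_A\omega$. Since $A$ is time-independent in the setting where this lemma is applied (cf.\ the hypothesis $\D_tA=0$ in Theorem \ref{theorem1.2}), $\D_t$ commutes with $\L_A$, so differentiating the equation $\D_t\omega+\L_A\omega=f$ in $t$ and substituting $\D_t\omega=f-\L_A\omega$ gives
\begin{eqnarray*}
\D_t v+\L_Av=\L_Af,\s\s v(0,\.)=\L_A\omega_0,\s\s \D_{\vec{n}}v=\D_{\vec{n}}f-\D_t\theta,
\end{eqnarray*}
where the boundary identity uses the compatibility $\D_t\D_X=\D_X\D_t$ for $X\in\mathcal{S}M$ (applied to $X=\vec{n}$, so that $\D_{\vec{n}}\D_t\omega=\D_t\theta$) together with $\D_{\vec{n}}\L_A\omega=\D_{\vec{n}}(f-\D_t\omega)=\D_{\vec{n}}f-\D_t\theta$.

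Next I would test this equation against $|v|^{p-2}v$ and integrate over $M$ to obtain
\begin{eqnarray*}
\frac{1}{p}\frac{d}{dt}\int_M|v|^p=\int_M|v|^{p-2}\langle v,\L_Af\rangle-\int_M|v|^{p-2}\langle v,\L_Av\rangle.
\end{eqnarray*}
Integration by parts on the second integral, after expanding $\D_x(|v|^{p-2}v)=|v|^{p-2}\D_xv+(p-2)|v|^{p-4}\langle v,\D_xv\rangle v$, produces two interior quadratic forms, namely $\int|v|^{p-2}\langle A(\D_xv),\D_xv\rangle\sharp g^{-1}$ and $(p-2)\int|v|^{p-4}\langle A(\D_xv),v\rangle\otimes\langle\D_xv,v\rangle\sharp g^{-1}$, plus a boundary integral containing only $A(\D_{\vec{n}}v)=A(\D_{\vec{n}}f-\D_t\theta)$. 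The first interior form is $\geq c_1\int|v|^{p-2}|\D_xv|^2$ by positive-definiteness of $A$, and the second is non-negative by \textbf{Condition 2} combined with $p\geq2$, so both appear with favorable sign. The source integral is controlled by Cauchy-Schwarz and Young's inequality, $\int|v|^{p-2}\langle v,\L_Af\rangle\leq\frac{p-1}{p}\|v\|^p_{L^p}+\frac{1}{p}\|\L_Af\|^p_{L^p}$.

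It then remains to dispatch the boundary integral $\int_{\p M}|v|^{p-1}|\D_{\vec{n}}f-\D_t\theta|$. Young's inequality splits off $C(\epsilon)\|\D_{\vec{n}}f-\D_t\theta\|^p_{L^p(\p M)}$ and leaves $\epsilon\|v\|^p_{L^p(\p M)}$; invoking the Boundary Trace Theorem and Kato's inequality exactly as in the $\|\D_t^k\Omega_n\|_{L^J(\p M)}$ calculation of Section \ref{section3}, this residual is dominated by $\epsilon'\int|v|^{p-2}|\D_xv|^2+C(\epsilon')\|v\|^p_{L^p}$, which is absorbed into the retained $c_1\int|v|^{p-2}|\D_xv|^2$ and the linear term. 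The outcome is the Gronwall-ready inequality
\begin{eqnarray*}
\frac{d}{dt}\|v\|^p_{L^p}\leq C\|v\|^p_{L^p}+C\|\L_Af\|^p_{L^p}+C\|\D_{\vec{n}}f-\D_t\theta\|^p_{L^p(\p M)},
\end{eqnarray*}
and integrating via the Gronwall lemma yields the stated bound.

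The main obstacle I expect is a careful sign-bookkeeping for the inner-normal boundary contribution produced by the integration by parts, coupled with the trace-plus-Young absorption that has to preserve enough of the good term $c_1\int|v|^{p-2}|\D_xv|^2$ to swallow $\epsilon\|v\|^p_{L^p(\p M)}$; once these details are organized the rest is a routine parabolic $L^p$ calculation that is entirely analogous to the one already performed for $\mathcal{E}_n^k$ in Section \ref{section3}.
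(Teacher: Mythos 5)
Your proposal is correct and follows essentially the same route as the paper: an $L^p$ energy estimate for $v=\L_A\omega$ (the paper differentiates $\|\L_A\omega\|_{L^p}^p$ directly, which amounts to the same computation as your explicit derivation of the evolution equation for $v$), with positive-definiteness of $A$ and \textbf{Condition 2} giving the good signs, the Boundary Trace Theorem plus Kato's and Young's inequalities absorbing the boundary term, and Gronwall finishing. No substantive difference.
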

\begin{proof}
Tediou computation gives
\begin{eqnarray*}
\frac{d}{dt}||\L_A\omega||^p_{L^p}&=&p\int|\L_A\omega|^{p-2}\langle\L_A\omega,\L_Af\rangle-p\int_{\p M}|\L_A\omega|^{p-2}\langle\L_A\omega,A(\D_{\vec{n}}f-\D_t\theta)\rangle\\
&&-p\int|\L_A\omega|^{p-2}g^{-1}\sharp\langle\D_{x}\L_A\omega,A(\D_{x}\L_A\omega)\rangle\\
&&-p(p-2)\int|\L_A\omega|^{p-4}g^{-1}\sharp\langle\L_A\omega,A(\D_{x}\L_A\omega)\rangle\otimes\langle\L_A\omega,\D_{x}\L_A\omega\rangle\\
&\lesssim&\int|\L_A\omega|^{p-1}|\L_Af|+\int_{\p M}|\L_A\omega|^{p-1}|\D_{\vec{n}}f-\D_t\theta|-\int|\L_A\omega|^{p-2}|\D_{x}\L_A\omega|^2\\
&\lesssim&||\L_A\omega||_{L^p}^{p}+||\L_Af||_{L^p}^{p}+||\L_A\omega||_{L^p(\p M)}^{p}+||\D_{\vec{n}}f-\D_t\theta||_{L^p(\p M)}^{p}-\int|\L_A\omega|^{p-2}|\D_{x}\L_A\omega|^2.
\end{eqnarray*}
Boundary Trace Theorem tells us
\begin{eqnarray*}
||\L_A\omega||_{L^p(\p M)}^{p}&=&||\,|\L_A\omega|^{p}||_{L^1(\p M)}\lesssim||\,|\L_A\omega|^{p}||_{W^{1,0}_1}\\
&=&||\L_A\omega||_{L^p}^{p}+||\D_x(|\L_A\omega|^{p})||_{L^1}\leq||\L_A\omega||_{L^p}^{p}+p||\,|\L_A\omega|^{p-1}|\D_x\L_A\omega|\,||_{L^1}\\
&\lesssim&C(\epsilon)||\L_A\omega||_{L^p}^{p}+\epsilon||\,|\L_A\omega|^{p-2}|\D_x\L_A\omega|^2||_{L^1},
\end{eqnarray*}
where we have used Kato's Inequality. In conclusion, one are able to obtain
\begin{eqnarray*}
\frac{d}{dt}||\L_A\omega||^p_{L^p}\lesssim||\L_A\omega||_{L^p}^{p}+||\L_Af||_{L^p}^{p}+||\D_{\vec{n}}f-\D_t\theta||_{L^p(\p M)}^{p},
\end{eqnarray*}
which implies the desired result.
\end{proof}
\begin{lemma}\label{lemma5.3}
Suppose $\omega$ is smooth and satisfies the following equation
\begin{eqnarray}\label{cb}
\left\{
\begin{array}{rl}
&\D_t\omega+\L_A\omega=f\\
&\omega(0,\.)=\omega_0,\s\s\D_{\vec{n}}\omega=\theta,
\end{array}
\right.
\end{eqnarray}
then, for $p\in[2,\infty)$, we have
\begin{eqnarray*}
\A^k(t)\leq Ce^{Ct}\left[\sum\limits_{r+q=0}^{k-1}||\D_t^r\L_A^qf(0,\.)||_{L^p}^p+\sum\limits_{i=0}^k||\L_A^i\omega_0||_{L^p}^p\right]+C\int_0^te^{C(t-s)}\left[||f(s,\.)||^p_{W_p^{0,k}}+\sum\limits_{i=0}^k||\D_t^{i}\theta(s,\.)||_{L^p(\p M)}^p\right]ds,
\end{eqnarray*}
where $\A^k(t):=\sum\limits_{i=0}^k\int|\D_t^i\omega(t,\.)|^p/p$.
\end{lemma}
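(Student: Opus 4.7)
The plan is to differentiate $(\ref{cb})$ in $t$ up to $k$ times, run an $L^p$ energy estimate at each level, sum in $i$, and close with Gronwall. Because the ambient setting of Theorem $\ref{theorem1.2}$ supplies $\D_tA=0$, the operator $\L_A$ commutes with $\D_t$, and the compatibility $\D_t\D_X=\D_X\D_t$ for $X\in\mathcal{S}M$ allows $\D_t$ to pass through the normal boundary derivative. Applying $\D_t^i$ to $(\ref{cb})$ therefore yields, for every $0\leq i\leq k$,
$$
\D_t^{i+1}\omega+\L_A(\D_t^i\omega)=\D_t^if,\s\s\D_{\vec{n}}(\D_t^i\omega)=\D_t^i\theta,
$$
with the initial value $\D_t^i\omega(0,\cdot)$ determined recursively by the equation itself.

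Next I would test each of these displayed equations against $|\D_t^i\omega|^{p-2}\D_t^i\omega$ and integrate by parts, reproducing the computation of the term $I$ in Step 1 of Section $\ref{section3}$. Positive-definiteness of $A$ together with Condition 2 turns the interior $\L_A$-contribution into the coercive quantity
$$
\int|\D_t^i\omega|^{p-2}|\D_x\D_t^i\omega|^2,
$$
while the boundary piece becomes $\int_{\p M}|\D_t^i\omega|^{p-2}\langle A(\D_t^i\theta),\D_t^i\omega\rangle$. H\"older and Young's inequalities bound the latter by $\|\D_t^i\theta\|_{L^p(\p M)}^p+\|\D_t^i\omega\|_{L^p(\p M)}^p$, and the Boundary Trace Theorem combined with Kato's inequality, just as in Step 1,
$$
\|\D_t^i\omega\|_{L^p(\p M)}^p\lesssim C(\epsilon)\|\D_t^i\omega\|_{L^p}^p+\epsilon\bigl\|\,|\D_t^i\omega|^{p-2}|\D_x\D_t^i\omega|^2\,\bigr\|_{L^1},
$$
allows me to absorb the troublesome trace into the coercive term by choosing $\epsilon$ small. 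Summing over $0\leq i\leq k$ produces
$$
\frac{d}{dt}\A^k\leq C\A^k+C\|f\|_{W^{0,k}_p}^p+C\sum_{i=0}^k\|\D_t^i\theta\|_{L^p(\p M)}^p,
$$
and one application of Gronwall delivers the time-integrated bound in the statement, up to identifying $\A^k(0)$.

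The remaining and only delicate step is to rewrite $\D_t^i\omega(0,\cdot)$ in terms of the data. Iterating the identity $\D_t\omega=f-\L_A\omega$ and using $[\D_t,\L_A]=0$ yields the commutator-free representation
$$
\D_t^i\omega(0,\cdot)=(-\L_A)^i\omega_0+\sum_{r+q=i-1}(-\L_A)^q\D_t^rf(0,\cdot),
$$
so that $\|\D_t^i\omega(0,\cdot)\|_{L^p}^p\lesssim\|\L_A^i\omega_0\|_{L^p}^p+\sum_{r+q=i-1}\|\D_t^r\L_A^qf(0,\cdot)\|_{L^p}^p$; summing over $i$ reproduces the initial-data term of the lemma. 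The main obstacle here is purely bookkeeping: tracking this representation and ensuring that the absorption constant in the boundary estimate remains independent of $i$, since every analytic ingredient---positive-definiteness of $A$, Condition 2, the Boundary Trace Theorem and Kato's inequality---has already been set up in Section $\ref{section2}$ and in Step 1 of Section $\ref{section3}$.
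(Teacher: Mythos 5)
Your proposal is correct and follows essentially the same route as the paper's own proof: differentiate the equation $i$ times in $t$, test with $|\D_t^i\omega|^{p-2}\D_t^i\omega$, absorb the boundary trace via the Trace Theorem and Kato's inequality, apply Gronwall, and expand $\D_t^i\omega(0,\cdot)$ recursively as $(-\L_A)^i\omega_0+\sum_{r+q=i-1}(-\L_A)^q\D_t^rf(0,\cdot)$. The only cosmetic difference is that your initial-data sum runs over $r+q=i-1$ exactly while the paper's runs over $r+q\leq k-1$, which only makes the paper's bound more generous.
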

\begin{proof}
Taking $i$-th derivatives with respect to $t$ on both sides of $(\ref{cb})$ leads to
\begin{eqnarray*}
\left\{
\begin{array}{rl}
&\D_t^{i+1}\omega+\L_A\D_t^{i}\omega=\D_t^{i}f\\
&\omega(0,\.)=\omega_0,\s\s\D_t^{j}\omega(0,\.)=\D_t^{j-1}f-\L_A\D_t^{j-1}\omega(0,\.),\s\s\D_{\vec{n}}\D_t^{i}\omega=\D_t^{i}\theta
\end{array}
\right.
\end{eqnarray*}
for $1\leq j\leq i$. Tedious computation yields
\begin{eqnarray*}
\frac{d}{dt}\A^k&=&\sum\limits_{i=0}^k\int|\D_t^i\omega|^{p-2}\langle\D_t^i\omega,\D_t^{i}f\rangle-\sum\limits_{i=0}^k\int_{\p M}|\D_t^i\omega|^{p-2}\langle\D_t^i\omega,A(\D_t^{i}\theta)\rangle\\
&&-\sum\limits_{i=0}^k\int|\D_t^i\omega|^{p-2}\langle\D_x\D_t^i\omega,A(\D_x\D_t^{i}\omega)\rangle\sharp g^{-1}\\
&&-(p-2)\sum\limits_{i=0}^k\int|\D_t^i\omega|^{p-4}\langle\D_t^i\omega,A(\D_x\D_t^{i}\omega)\rangle\otimes\langle\D_t^i\omega,\D_x\D_t^{i}\omega\rangle\sharp g^{-1}\\
&\lesssim&\A^k+||f||^p_{W_p^{0,k}}+\sum\limits_{i=0}^k\int_{\p M}|\D_t^i\omega|^{p}+\sum\limits_{i=0}^k\int_{\p M}|\D_t^{i}\theta|^p-\sum\limits_{i=0}^k\int|\D_t^i\omega|^{p-2}|\D_x\D_t^i\omega|^2.
\end{eqnarray*}
Boundary Trace Theorem tells us
\begin{eqnarray*}
||\D_t^{i}\omega||_{L^p(\p M)}^{p}&=&||\,|\D_t^{i}\omega|^{p}||_{L^1(\p M)}\lesssim||\,|\D_t^{i}\omega|^{p}||_{W^{1,0}_1}\\
&=&||\D_t^{i}\omega||_{L^p}^{p}+||\D_x(|\D_t^{i}\omega|^{p})||_{L^1}\leq||\D_t^{i}\omega||_{L^p}^{p}+p||\,|\D_t^{i}\omega|^{p-1}|\D_x\D_t^{i}\omega|\,||_{L^1}\\
&\lesssim&C(\epsilon)||\D_t^{i}\omega||_{L^p}^{p}+\epsilon||\,|\D_t^{i}\omega|^{p-2}|\D_x\D_t^{i}\omega|^2||_{L^1},
\end{eqnarray*}
which implies
\begin{eqnarray*}
\frac{d}{dt}\A^k&\lesssim&\A^k+||f||^p_{W_p^{0,k}}+\sum\limits_{i=0}^k||\D_t^{i}\theta||_{L^p(\p M)}^p.
\end{eqnarray*}
Gronwall's Inequality leads to
\begin{eqnarray*}
\A^k(t)\leq e^{Ct}\A^k(0)+C\int_0^te^{C(t-s)}\left[||f(s,\.)||^p_{W_p^{0,k}}+\sum\limits_{i=0}^k||\D_t^{i}\theta(s,\.)||_{L^p(\p M)}^p\right]ds.
\end{eqnarray*}
In the next, we shall estimate $\A^k(0)$. Induction argument gives
$$
\D_t^i\omega(0,\.)=\sum\limits_{r+q=0}^{i-1}\D_t^r\circ(-\L_A)^qf(0,\.)+(-\L_A)^i\omega_0,
$$
which means
$$
\A^k(0)\lesssim\sum\limits_{r+q=0}^{k-1}||\D_t^r\L_A^qf(0,\.)||_{L^p}^p+\sum\limits_{i=0}^k||\L_A^i\omega_0||_{L^p}^p.
$$
\end{proof}

\textbf{Acknowledgement} The author is supported by Fundamental Research Funds for the Central Universities (No. 2021MS045).

\begin{tabular}{@{}r@{}p{16cm}@{}}
&Zonglin Jia, {\small{Department of Mathematics and Physics, North China Electric Power University,
\ Beijing, \ China}

\ Email: 50902525@ncepu.edu.cn}.
\end{tabular}

\end{document}